\documentclass[11pt]{amsart}

\usepackage[utf8]{inputenc}
\usepackage[T1]{fontenc}
\usepackage{amsmath,amssymb,amsfonts,amsthm}
\usepackage{mathtools}
\usepackage{geometry}
\usepackage{float}
\usepackage[table]{xcolor}
\usepackage{tabularx}
\definecolor{tableheader}{gray}{0.84}
\definecolor{tableband}{gray}{0.95}
\definecolor{tableline}{gray}{0.35}
\usepackage{hyperref}
\hypersetup{
  hidelinks,
  pdftitle={On a Measure-Theoretic Reading of beta-Gruss-Type Inequalities},
  pdfauthor={K. Castillo and A. Macedo},
  pdfkeywords={beta-integral, atomic measure, covariance, Gruss inequality,
    weighted sequence space, positive quadrature}
}

\theoremstyle{plain}
\newtheorem{theorem}{Theorem}[section]
\newtheorem{proposition}[theorem]{Proposition}
\newtheorem{corollary}[theorem]{Corollary}
\newtheorem{lemma}[theorem]{Lemma}

\theoremstyle{definition}
\newtheorem{definition}[theorem]{Definition}

\theoremstyle{remark}
\newtheorem{remark}[theorem]{Remark}

\DeclareMathOperator{\E}{E}
\DeclareMathOperator{\Var}{Var}
\DeclareMathOperator{\Cov}{Cov}

\title[On a measure-theoretic reading of \(\beta\)-Gr\"uss inequalities]
{On a Measure-Theoretic Reading of \(\beta\)-Gr\"uss-Type Inequalities}

\author{K. Castillo}
\address{CMUC, Department of Mathematics, University of Coimbra, 3000-143 Coimbra,
Portugal}
\email{kenier@mat.uc.pt}

\author{\^A. Macedo}
\address{Centro de Matem\'atica da Universidade do Minho -- Polo CMAT-UTAD,
and Departamento de Matem\'atica, Universidade de Tr\'as-os-Montes e Alto
Douro, Quinta de Prados, 5001--801 Vila Real, Portugal}
\email{amacedo@utad.pt}

\subjclass[2020]{26D15, 28A25, 46E30, 60E15, 65D32}
\keywords{\(\beta\)-integral, atomic measure, covariance, Gr\"uss
inequality, weighted sequence space, positive quadrature}

\date{}

\begin{document}
\raggedbottom

\begin{abstract}
On its absolute-integrability domain, the positive \(\beta\)-integral is
integration with respect to a finite positive purely atomic measure. After
normalisation, its Chebyshev
functional is a covariance, and its \(L^p\)-spaces are canonically isometric
to direct sums of weighted sequence spaces. On the natural product- and
square-integrability domains, the previously formulated
\(\beta\)-Gr\"uss inequalities reduce to Korkine's identity, H\"older's
inequality, Cauchy--Schwarz and elementary variance bounds. On the induced
countably atomic probability space, the optimal fixed-grid coefficient is
\(\kappa_\beta=\sup_A P_\beta(A)(1-P_\beta(A))\le1/4\), the countably
atomic counterpart of the classical finite weighted coefficient; it may be
strictly smaller than \(1/4\). The same reduction corrects a coefficient
previously claimed to be best possible and identifies a missing sign
hypothesis in a related convexity estimate. For the Riemann--Stieltjes
\(\beta\)-integral, within the class of finite induced signed measures, the
\(\beta\)-Lipschitz condition is equivalent to
\(|\nu_u|\le L\mu_\beta\). This reduces the principal centred signed
estimate to total variation and yields its exact fixed-grid coefficient
\(2\kappa_\beta\). Finally, truncation of the two atomic orbits gives
positive quadrature rules with explicit tail masses. A fixed-point
correction yields computable H\"older error bounds, while the uncorrected
geometrically graded rule accommodates integrable singularities at the
fixed point.
\end{abstract}

\maketitle

\section{Introduction}

The same structural estimate often recurs in apparently different settings.
Identifying its natural framework can both simplify the underlying theory
and clarify the scope of its applications. Gr\"uss-type inequalities
originate in the classical estimate of Gr\"uss for the Chebyshev functional
associated with the ordinary integral \cite{Gruss}. They have since been
formulated in inner-product spaces, for positive linear functionals and for
Riemann--Stieltjes integrals; see, for instance,
\cite{Dragomir1999,LiMohapatraRodriguez,DragomirStieltjes}. The corresponding inequalities for
the \(\beta\)-integral and a related Riemann--Stieltjes
\(\beta\)-integral were studied in \cite{CHM}.

The importance of understanding the common mechanism behind these
inequalities is not confined to the conceptual simplification of the
existing \(\beta\)-integral theory. Gr\"uss-type estimates quantify the
failure of a normalised positive functional or averaging operator to
preserve products, namely the defect
\[
L(fg)-L(f)L(g).
\]
This quantity has a well-established role in applications. It is central to
the study of the multiplicativity of linear operators
\cite{GavreaTachev}, while Chebyshev--Gr\"uss estimates have been applied in
approximation theory to tensor products of classical operators, including
Lagrange, Bernstein, Mirakjan--Favard--Sz\'asz and piecewise linear
interpolation operators \cite{AcuRusu}. Discrete Gr\"uss inequalities also
provide approximation bounds for Fourier and Mellin transforms of
sequences, polynomials with coefficients in Hilbert spaces and Lipschitzian
vector-valued mappings \cite{DragomirDiscrete}, whereas their
Riemann--Stieltjes counterparts yield remainder estimates for numerical
quadrature rules \cite{DragomirStieltjes}. Analogous product-defect
inequalities also occur, in a broader noncommutative setting, for matrix
trace functionals and positive maps on \(C^*\)-algebras
\cite{Renaud,MoslehianRajic,MatharuMoslehian}.

From this perspective, the measure-theoretic identification developed in the
present paper provides a general transfer principle for the commutative
discrete calculi considered here. Once the discrete integral or averaging
functional under consideration has been represented by a finite positive
measure and normalised, its covariance, variance, Gr\"uss and pre-Gr\"uss
estimates follow from general results on probability spaces, independently
of the particular formula for its nodes and weights. The derivation of the
generic estimates for a specific discrete operator is therefore reduced to
establishing the corresponding measure representation and verifying the
required boundedness or integrability conditions, rather than reconstructing
essentially the same inequalities through a new collection of
calculus-specific series computations. In this sense, identifying the
underlying measure-theoretic mechanism contributes not only to the theoretical
organisation of the subject, but also to the methodology available for its
applications.

Some of the general background is acknowledged in \cite{CHM} itself:
Theorem~3.1 is introduced there as a simple corollary of an earlier
positive-functional inequality, and Proposition~3.5 as a direct consequence
of H\"older's inequality. The purpose of the present comparison is to carry
that observation through the whole argument, to identify the natural
domains and constants, and to distinguish the operator-specific identities
from the inequality mechanism. The second author of the present paper was
also a coauthor of \cite{CHM}; the discussion below is intended as a
structural clarification and correction of that earlier analysis.

The precise relation can be stated result by result. After normalisation,
Proposition~3.3 is non-negativity of variance, Lemma~3.4 is
Korkine's identity, Propositions~3.5--3.6 are H\"older and
Cauchy--Schwarz, Theorem~3.1 and Corollary~3.7 combine Cauchy--Schwarz with
an elementary variance bound, and Theorem~3.8 is the centred covariance
identity followed by \(L^\infty\)-\(L^1\) and Cauchy--Schwarz estimates.
No property of the orbit map enters these deductions after positivity and
total mass have been established. In the signed setting,
\(|\nu_u|\le L\mu_\beta\) reduces Proposition~4.4, Theorem~4.5 and their
listed specialisations to total-variation estimates; it also makes the
separate orbit-limit hypothesis automatic. In Corollary~4.15 the coefficient
\(1/2\) gives a valid but non-optimal bound; the false assertion is that it
is best possible, since the sharp universal coefficient is \(1/4\). Finally,
the probability model in Section~5 is \(P_\beta\) and
equation~(5.2) is the same covariance estimate, whereas the convex formula
(5.3) requires an additional sign hypothesis. Sections~4 and~5 give the
detailed reductions, the necessary domain qualifications and the explicit
counterexamples.

The contribution of the present paper is therefore a systematic structural
and corrective analysis. Atomic and weighted-sequence viewpoints have
finite and affine precedents in the literature. What is added here is their
unified formulation for the general positive \(\beta\)-integral, the
countably atomic counterpart of the classical finite weighted coefficient,
the correction of the stationary-endpoint quotient, the explicit analysis
of the missing integrability domains, and the source-specific corrections
in the cited \(\beta\)-Gr\"uss results. In the signed setting we also
determine the exact coefficient for the full class of bounded functions and
\(\beta\)-Lipschitz integrators on a fixed grid. The final section records a
mass-preserving fixed-point tail correction for the associated positive
quadrature.

The paper is organised as follows. Section~2 recalls the elementary
measure-theoretic probability facts on which the argument rests, including
Korkine's identity, H\"older's inequality on the product space, the
Cauchy--Schwarz inequality for covariance, and the standard variance
estimates leading to
Gr\"uss- and pre-Gr\"uss-type bounds. In Section~3 we identify the positive
\(\beta\)-integral with integration with respect to a finite positive purely
atomic measure, identify the associated \(L^p\)-spaces with weighted sequence
spaces, and show that the \(\beta\)-Chebyshev functional is precisely the
corresponding covariance. Section~4 gives the detailed reduction of the
principal \(\beta\)-Gr\"uss inequalities, their grid-dependent sharpenings
and the sharp refinement of \cite[Corollary~4.15]{CHM}. Section~5 treats the
corresponding
Riemann--Stieltjes \(\beta\)-integral, separating the positive case, where
the covariance argument still applies, from the signed case, where the
relevant estimates are consequences of domination by the total variation
measure. Section~6 records a numerical consequence of the atomic
representation: positive truncated quadrature rules with exactly known tail
mass, a fixed-point correction, and a bound for an integrable singularity.
The concluding section summarises the scope and limits of the reduction.

\section{Measure-theoretic preliminaries}
We recall a few elementary facts from measure-theoretic probability. They are
included to fix notation and to make explicit the measure-theoretic framework
used below. When a statement is written in a form less commonly quoted, we
include a short proof for completeness. Let \((\Omega,\mathcal A,P)\) be a
probability space. Throughout this section, all functions are real-valued.
For \(h\in L^1(P)\), write
\[
\E_P h:=\int_\Omega h\,dP.
\]
Whenever \(f,g\in L^1(P)\) and \(fg\in L^1(P)\), define
\[
\Cov_P(f,g)
:=
\E_P(fg)-\E_P f\,\E_P g.
\]
If \(f\in L^2(P)\), then \(f^2\in L^1(P)\), and we define
\[
\Var_P(f):=\Cov_P(f,f).
\]

\begin{lemma}[Centred covariance identity]\label{lem:centred-covariance}
Let \(f,g\in L^1(P)\) be such that \(fg\in L^1(P)\). Then, for every constant
\(c\in\mathbb R\),
\[
\Cov_P(f,g)
=
\E_P\bigl[(f-c)(g-\E_P g)\bigr].
\]
In particular, since
\[
(f-\E_P f)(g-\E_P g)\in L^1(P),
\]
one also has
\[
\Cov_P(f,g)
=
\E_P\bigl[(f-\E_P f)(g-\E_P g)\bigr].
\]
\end{lemma}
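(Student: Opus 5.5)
The plan is to expand the product and use linearity of \(\E_P\). The one thing to watch is integrability: every term must lie in \(L^1(P)\) before linearity is applied.

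Fix \(c\in\mathbb R\) and write \(m:=\E_P g\), which is finite because \(g\in L^1(P)\). Pointwise,
\[
(f-c)(g-m)=fg-mf-cg+cm .
\]
Each of the four terms on the right is in \(L^1(P)\). Indeed \(fg\in L^1(P)\) by hypothesis, \(mf\) and \(cg\) are constant multiples of integrable functions, and \(cm\) is a constant, hence integrable since \(P\) is a probability measure. Therefore \((f-c)(g-m)\in L^1(P)\), and linearity of the integral gives
\[
\E_P\bigl[(f-c)(g-m)\bigr]=\E_P(fg)-m\,\E_P f-c\,\E_P g+cm .
\]
Since \(m=\E_P g\) and \(P(\Omega)=1\), we have \(\E_P(cm)=cm\), so the last two terms are \(-cm+cm=0\). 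What remains is \(\E_P(fg)-\E_P f\,\E_P g=\Cov_P(f,g)\), which is the first identity.

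For the second assertion, take \(c:=\E_P f\), which is finite because \(f\in L^1(P)\). The computation above already shows that \((f-c)(g-m)\in L^1(P)\) for every real \(c\). In particular \((f-\E_P f)(g-\E_P g)\in L^1(P)\), and its expectation equals \(\Cov_P(f,g)\).

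There is no genuine obstacle here. The only point needing care is that the integrability of the centred product must be derived from the hypothesis \(fg\in L^1(P)\), not assumed. It is not automatic from \(f,g\in L^1(P)\) alone, and the expansion above settles it.
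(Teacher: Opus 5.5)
Your proof is correct and takes essentially the same route as the paper: both apply linearity of \(\E_P\) to the expanded product, and the constant \(c\) drops out because \(\E_P(g-\E_P g)=0\). Your explicit check that each of the four terms lies in \(L^1(P)\) spells out the integrability of the centred product, which the paper's proof leaves implicit.
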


\begin{proof}
Since
\[
\E_P(g-\E_P g)=0,
\]
we have
\[
\E_P\bigl[(f-c)(g-\E_P g)\bigr]
=
\E_P\bigl[f(g-\E_P g)\bigr].
\]
The right-hand side is exactly
\[
\E_P(fg)-\E_P f\,\E_P g.
\]
Taking \(c=\E_P f\) gives the second identity.
\end{proof}

\begin{lemma}[Korkine identity]\label{lem:korkine}
Let \(f,g\in L^1(P)\) be such that \(fg\in L^1(P)\). Then
\[
\Cov_P(f,g)
=
\frac12
\int_{\Omega}\int_{\Omega}
(f(x)-f(y))(g(x)-g(y))\,dP(x)\,dP(y).
\]
In particular, this identity holds whenever \(f\in L^p(P)\) and
\(g\in L^q(P)\), where \(p,q>1\) and
\[
\frac1p+\frac1q=1.
\]
\end{lemma}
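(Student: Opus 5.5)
The plan is to expand the integrand on the product space \((\Omega\times\Omega,\mathcal A\otimes\mathcal A,P\otimes P)\) and integrate term by term. Pointwise,
\[
(f(x)-f(y))(g(x)-g(y))=f(x)g(x)+f(y)g(y)-f(x)g(y)-f(y)g(x).
\]
I will first check that each of the four terms lies in \(L^1(P\otimes P)\). By Tonelli, \(\int\int|f(x)g(x)|\,dP(x)\,dP(y)=\E_P|fg|<\infty\), using that \(P\) is a probability measure; the same holds for \(f(y)g(y)\). For the mixed terms, Tonelli gives
\[
\int\int|f(x)g(y)|\,dP(x)\,dP(y)=\E_P|f|\,\E_P|g|<\infty.
\]
Hence the integrand is integrable on the product space. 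The double integral is then well defined, whether read as an iterated integral in either order or as an integral against \(P\otimes P\), by Fubini.

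Next I integrate each term using Fubini together with \(P(\Omega)=1\). The two diagonal terms each give \(\E_P(fg)\). Each mixed term gives \(\E_P f\,\E_P g\). Summing and halving yields
\[
\tfrac12\bigl(2\E_P(fg)-2\E_P f\,\E_P g\bigr)=\Cov_P(f,g).
\]

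For the final assertion, I only need the hypotheses of the main statement. Since \(P\) is finite, \(L^p(P)\subset L^1(P)\) and \(L^q(P)\subset L^1(P)\), by Hölder against the constant \(1\). Hölder's inequality with conjugate exponents gives \(\E_P|fg|\le\|f\|_p\|g\|_q<\infty\), so \(fg\in L^1(P)\). The identity then follows from the first part.

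The only delicate point is justifying the splitting of the double integral into four separately integrable pieces, since linearity cannot be applied before integrability is established. This is exactly what the Tonelli bounds above supply. Everything else is bookkeeping.
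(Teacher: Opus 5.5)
Your proposal is correct and follows essentially the paper's proof: it likewise splits the integrand into the four product terms, checks that each lies in \(L^1(P\otimes P)\), expands and applies Fubini, and gets the conjugate-exponent case from H\"older. Your explicit remark that \(L^p(P),L^q(P)\subset L^1(P)\) because \(P\) is a probability measure fills in a step the paper leaves implicit.
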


\begin{proof}
The integrand is absolutely integrable on the product space. Indeed,
\begin{align*}
&|(f(x)-f(y))(g(x)-g(y))|\\[7pt]
&\quad\le
|f(x)g(x)|
+
|f(x)|\,|g(y)|
+
|f(y)|\,|g(x)|
+
|f(y)g(y)|,
\end{align*}
and every term on the right belongs to \(L^1(P\otimes P)\), since
\(f,g\in L^1(P)\) and \(fg\in L^1(P)\). We may therefore expand the
integral and apply Fubini's theorem:
\begin{align*}
&\frac12
\int_\Omega\int_\Omega
(f(x)-f(y))(g(x)-g(y))\,dP(x)\,dP(y)\\[7pt]
&\quad=
\frac12\left(
2\E_P(fg)-2\E_P f\,\E_P g
\right)
=
\Cov_P(f,g).
\end{align*}
If \(f\in L^p(P)\) and \(g\in L^q(P)\), with \(p\) and \(q\) conjugate,
then \(fg\in L^1(P)\) by H\"older's inequality, and the preceding identity
applies.
\end{proof}

\begin{lemma}[Product-space H\"older estimate]\label{lem:product-holder}
Let \(p,q>1\) satisfy
\[
\frac1p+\frac1q=1.
\]
If \(F\in L^p(P\otimes P)\) and \(G\in L^q(P\otimes P)\), then
\[
\int_{\Omega\times\Omega}|FG|\,d(P\otimes P)
\le
\|F\|_{L^p(P\otimes P)}
\|G\|_{L^q(P\otimes P)}.
\]
In particular, if \(f\in L^p(P)\) and \(g\in L^q(P)\), then
\begin{align*}
|\Cov_P(f,g)|
&\le
\frac12
\left(
\int_\Omega\int_\Omega |f(x)-f(y)|^p\,dP(x)\,dP(y)
\right)^{1/p}\\[7pt]
&\quad \times \left(
\int_\Omega\int_\Omega |g(x)-g(y)|^q\,dP(x)\,dP(y)
\right)^{1/q}.
\end{align*}
\end{lemma}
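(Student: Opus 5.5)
The first assertion is simply H\"older's inequality on the measure space \((\Omega\times\Omega,\mathcal A\otimes\mathcal A,P\otimes P)\). Since \(P\otimes P\) is again a (probability) measure, the standard proof applies verbatim. I would cite it, or sketch it through Young's inequality \(ab\le a^p/p+b^q/q\) applied to \(|F|/\|F\|_p\) and \(|G|/\|G\|_q\), after discarding the trivial case in which one of the norms vanishes. So the real content is the covariance estimate, and I would deduce it from this product-space inequality together with Lemma~\ref{lem:korkine}.

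Fix \(f\in L^p(P)\) and \(g\in L^q(P)\), and set
\[
F(x,y):=f(x)-f(y),\qquad G(x,y):=g(x)-g(y).
\]
Before applying the first part, I must verify that \(F\in L^p(P\otimes P)\) and \(G\in L^q(P\otimes P)\). Measurability with respect to the product \(\sigma\)-algebra is clear, since \((x,y)\mapsto f(x)\) and \((x,y)\mapsto f(y)\) are compositions of \(f\) with the coordinate projections. For integrability, Minkowski's inequality in \(L^p(P\otimes P)\) gives
\[
\|F\|_{L^p(P\otimes P)}\le \|f\circ\pi_1\|_{L^p(P\otimes P)}+\|f\circ\pi_2\|_{L^p(P\otimes P)}.
\]
By Tonelli's theorem and \(P(\Omega)=1\), each term on the right equals \(\|f\|_{L^p(P)}\). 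The same argument with \(q\) in place of \(p\) handles \(G\). This integrability check is the only point needing any care, and it is routine.

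Next, because \(p\) and \(q\) are conjugate, H\"older's inequality on \(P\) gives \(fg\in L^1(P)\), and \(f,g\in L^1(P)\) since \(P\) is finite. Hence Lemma~\ref{lem:korkine} applies and gives
\[
\Cov_P(f,g)=\frac12\int_{\Omega\times\Omega}FG\,d(P\otimes P),
\]
where the double integral is identified with the product integral by Fubini's theorem; absolute integrability was already noted in the proof of Lemma~\ref{lem:korkine}. Taking absolute values, we have \(|\int FG|\le\int|FG|\). Applying the first part of the statement then gives
\[
|\Cov_P(f,g)|\le\tfrac12\,\|F\|_{L^p(P\otimes P)}\,\|G\|_{L^q(P\otimes P)}.
\]
Finally, I would rewrite each norm as the iterated integral displayed in the statement, using Tonelli's theorem for the nonnegative integrands \(|F|^p\) and \(|G|^q\). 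This yields the claimed bound.
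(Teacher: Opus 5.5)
Your proposal is correct and follows essentially the same route as the paper. Both arguments apply H\"older's inequality on \(P\otimes P\), use Korkine's identity to bound \(|\Cov_P(f,g)|\) by \(\tfrac12\int|F||G|\,d(P\otimes P)\), and then apply the first assertion to the differences. The only difference is in checking \(F\in L^p(P\otimes P)\): you use Minkowski and Tonelli, whereas the paper uses the pointwise bound \(|f(x)-f(y)|^p\le 2^{p-1}\bigl(|f(x)|^p+|f(y)|^p\bigr)\).
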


\begin{proof}
The first assertion is H\"older's inequality on
\[
(\Omega\times\Omega,\mathcal A\otimes\mathcal A,P\otimes P).
\]
By Lemma~\ref{lem:korkine},
\[
|\Cov_P(f,g)|
\le
\frac12
\int_\Omega\int_\Omega
|f(x)-f(y)|\,|g(x)-g(y)|\,dP(x)\,dP(y).
\]
The covariance estimate follows by applying the first assertion to
\(
F(x,y)=f(x)-f(y)
\)
and
\(
G(x,y)=g(x)-g(y).
\)
These differences belong to the asserted product spaces because, for
example,
\[
|f(x)-f(y)|^p
\le
2^{p-1}\bigl(|f(x)|^p+|f(y)|^p\bigr),
\]
and similarly for \(g\).
\end{proof}

\begin{lemma}[Cauchy--Schwarz inequality for covariance]
\label{lem:cauchy-covariance}
Let \(f,g\in L^2(P)\). Then
\[
|\Cov_P(f,g)|
\le
\sqrt{\Var_P(f)}\sqrt{\Var_P(g)}.
\]
\end{lemma}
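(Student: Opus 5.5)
The plan is to reduce the covariance inequality to the ordinary Cauchy--Schwarz inequality in \(L^2(P)\), applied to centred functions, using the centred covariance identity of Lemma~\ref{lem:centred-covariance}.

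First, the domain question. Since \(P\) is a probability measure, \(L^2(P)\subset L^1(P)\), so \(f,g\in L^1(P)\). Moreover \(fg\in L^1(P)\) by Cauchy--Schwarz or by \(|fg|\le\frac12(f^2+g^2)\). Hence \(\Cov_P(f,g)\), \(\Var_P(f)\) and \(\Var_P(g)\) are all defined.

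Next, set
\[
\tilde f:=f-\E_P f,\qquad \tilde g:=g-\E_P g .
\]
These lie in \(L^2(P)\), because constants are square-integrable on a probability space. By the second identity of Lemma~\ref{lem:centred-covariance},
\[
\Cov_P(f,g)=\E_P(\tilde f\tilde g),\qquad
\Var_P(f)=\E_P(\tilde f^{\,2})\ge 0,\qquad
\Var_P(g)=\E_P(\tilde g^{\,2})\ge 0,
\]
so the square roots in the statement make sense.

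Finally, apply the Cauchy--Schwarz inequality in \(L^2(P)\), which is the case \(p=q=2\) of H\"older's inequality:
\[
|\E_P(\tilde f\tilde g)|\le \E_P|\tilde f\tilde g|\le \bigl(\E_P\tilde f^{\,2}\bigr)^{1/2}\bigl(\E_P\tilde g^{\,2}\bigr)^{1/2}.
\]
Substituting the three identities above gives the claim.

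No step is a genuine obstacle. The only point needing care is the integrability bookkeeping that justifies invoking Lemma~\ref{lem:centred-covariance}, and it is handled in the first step.
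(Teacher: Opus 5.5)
Your proposal is correct and follows the paper's proof: both rewrite \(\Cov_P(f,g)\) via the centred covariance identity as \(\E_P[(f-\E_P f)(g-\E_P g)]\) and then apply the Cauchy--Schwarz inequality in \(L^2(P)\). Your integrability check (\(L^2(P)\subset L^1(P)\) and \(fg\in L^1(P)\)) spells out a step the paper leaves implicit.
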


\begin{proof}
By Lemma~\ref{lem:centred-covariance},
\[
\Cov_P(f,g)
=
\E_P\bigl[(f-\E_P f)(g-\E_P g)\bigr].
\]
The result follows from the Cauchy--Schwarz inequality in \(L^2(P)\).
\end{proof}

\begin{lemma}[Bhatia--Davis and Popoviciu variance bounds]
\label{lem:variance-bound}
Let \(f\in L^\infty(P)\), and assume that
\[
m\le f\le M
\]
\(P\)-almost everywhere. Then
\[
\Var_P(f)
\le
\left(M-\E_P f\right)
\left(\E_P f-m\right)
\le
\frac{(M-m)^2}{4}.
\]
\end{lemma}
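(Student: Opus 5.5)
The plan is to derive both inequalities from one pointwise estimate and then integrate it. Write \(\mu:=\E_P f\). Since \(f\in L^\infty(P)\), we have \(f\in L^2(P)\), so \(\Var_P(f)\) is defined. Because \(m\le f\le M\) holds \(P\)-almost everywhere, the product \((M-f)(f-m)\) is non-negative \(P\)-almost everywhere. It is also bounded, hence integrable.

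Integrating this non-negative function gives
\[
0\le \E_P\bigl[(M-f)(f-m)\bigr]
=
(M+m)\mu-Mm-\E_P(f^2).
\]
Using \(\Var_P(f)=\E_P(f^2)-\mu^2\), which is Lemma~\ref{lem:centred-covariance} with \(g=f\), this becomes
\[
\Var_P(f)\le (M+m)\mu-Mm-\mu^2=(M-\mu)(\mu-m).
\]
The last equality is an expansion that I would verify directly. This proves the first (Bhatia--Davis) inequality.

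For the second (Popoviciu) inequality, note that \(m\le\mu\le M\) because expectation is monotone. Set \(a:=M-\mu\ge0\) and \(b:=\mu-m\ge0\), so that \(a+b=M-m\). The AM--GM inequality, in the form \(ab\le\bigl(\tfrac{a+b}{2}\bigr)^2\), which is equivalent to \((a-b)^2\ge0\), then gives \((M-\mu)(\mu-m)\le (M-m)^2/4\).

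None of these steps is a real obstacle. The only points needing care are integrability, which holds because \(f\) is bounded, and the fact that the bounds hold only almost everywhere, which does not affect the integrals.
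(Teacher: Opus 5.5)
Your proof is correct. The Popoviciu step is the same as the paper's: with $X=M-\E_P f$ and $Y=\E_P f-m$ one has $X+Y=M-m$ and $XY\le (X+Y)^2/4$.

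The two arguments differ in the Bhatia--Davis step. The paper does not prove it; it cites the Bhatia--Davis theorem and applies it to the bounded random variable $f$. You derive it directly. You integrate the $P$-a.e.\ non-negative bounded function $(M-f)(f-m)$ and use linearity to get $\E_P(f^2)\le (M+m)\E_P f-Mm$. Subtracting $(\E_P f)^2$ then gives the bound, and the expansion $(M+m)\mu-Mm-\mu^2=(M-\mu)(\mu-m)$ checks out. This is the standard argument behind the cited theorem. Your route makes the lemma self-contained and shows that only a.e.\ boundedness and the linearity and positivity of $\E_P$ are used; the paper's citation is shorter and attributes the result.

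Two small remarks:
\begin{itemize}
\item The identity $\Var_P(f)=\E_P(f^2)-(\E_P f)^2$ is the paper's definition of $\Var_P$, via $\Cov_P(f,f)$, so you do not need to invoke Lemma~\ref{lem:centred-covariance}.
\item $ab\le (a+b)^2/4$ holds for all real $a,b$, so checking $m\le\mu\le M$ is harmless but unnecessary.
\end{itemize}
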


\begin{proof}
The first inequality is the Bhatia--Davis variance bound
\cite[Theorem~1]{BhatiaDavis}, applied to the bounded random variable \(f\);
in the present notation this gives
\[
\Var_P(f)
\le
\left(M-\E_P f\right)
\left(\E_P f-m\right).
\]
The second inequality is Popoviciu's variance bound \cite{Popoviciu};
see also \cite[Corollary~1]{BhatiaDavis}. In the present setting it follows
immediately from
\[
XY\le \frac{(X+Y)^2}{4},
\]
with
\[
X=M-\E_P f,
\quad
Y=\E_P f-m.
\]
Indeed,
\[
X+Y=M-m,
\]
and therefore
\[
\left(M-\E_P f\right)
\left(\E_P f-m\right)
\le
\frac{(M-m)^2}{4}.
\]
\end{proof}

Gr\"uss inequalities on probability spaces and for positive linear
functionals have an extensive literature; see, for example,
\cite{MitrinovicPecaricFink,Horvath}. The fixed-measure refinement below is
not claimed as new in the finite weighted case: the corresponding
subset-sum coefficient appears in \cite[Theorem~2]{AndricaBadea}; see also
\cite[Corollary~2.6]{IzuminoPecaric}. We record the probability-space
formulation and its short proof because it treats variance, mean absolute
deviation and covariance simultaneously, and because the application below
has countably many atoms.

\begin{theorem}[Sharp variance and Gr\"uss bounds on a fixed probability space]
\label{thm:measure-sensitive-gruss}
Define
\[
\kappa(P):=\sup_{A\in\mathcal A}P(A)(1-P(A)).
\]
Then \(0\le\kappa(P)\le1/4\). If \(f\in L^\infty(P)\) is real-valued and
\[
m\le f\le M
\]
\(P\)-almost everywhere, then
\[
\Var_P(f)\le \kappa(P)(M-m)^2
\]
and
\[
\E_P|f-\E_P f|\le 2\kappa(P)(M-m).
\]
Both constants are optimal for the fixed probability space
\((\Omega,\mathcal A,P)\). Consequently, if also
\(g\in L^\infty(P)\) is real-valued and
\[
n\le g\le N
\]
\(P\)-almost everywhere, then
\[
|\Cov_P(f,g)|
\le
\kappa(P)(M-m)(N-n)
\le
\frac14(M-m)(N-n).
\]
The coefficient \(\kappa(P)\) in this covariance bound is likewise optimal
for the fixed probability space.
\end{theorem}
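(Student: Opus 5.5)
The plan is to reduce everything to indicator functions by a layer-cake decomposition, and to control the covariance of two indicators by Lemma~\ref{lem:cauchy-covariance}. The bound \(0\le\kappa(P)\le1/4\) is immediate, since \(t(1-t)\in[0,1/4]\) for \(t\in[0,1]\). For the inequalities we may assume \(M>m\); if \(M=m\) the function \(f\) is a.e.\ constant and both sides vanish. Set
\[
h:=\frac{f-m}{M-m},
\]
so that \(0\le h\le1\) a.e., \(\Var_P(f)=(M-m)^2\Var_P(h)\) and \(\E_P|f-\E_Pf|=(M-m)\E_P|h-\E_Ph|\). Write \(A_t:=\{h>t\}\) for \(t\in[0,1]\). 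Then
\[
h(\omega)=\int_0^1 \mathbf 1_{A_t}(\omega)\,dt \quad\text{for a.e. }\omega .
\]

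The basic estimate is that for any two events \(A,B\), Lemma~\ref{lem:cauchy-covariance} gives
\[
|\Cov_P(\mathbf 1_A,\mathbf 1_B)|\le\sqrt{P(A)(1-P(A))\,P(B)(1-P(B))}\le\kappa(P).
\]
I then insert the layer-cake representation into the bilinear covariance. All integrands are bounded, so Fubini's theorem on \(\Omega\times[0,1]\) (respectively \(\Omega\times[0,1]^2\)) is justified; the set \(\{(\omega,t):h(\omega)>t\}\) is jointly measurable. This yields
\[
\Var_P(h)=\int_0^1\!\!\int_0^1\Cov_P(\mathbf 1_{A_s},\mathbf 1_{A_t})\,ds\,dt\le\kappa(P).
\]
The covariance bound follows in the same way. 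Normalise \(g\) to \(k=(g-n)/(N-n)\) with level sets \(B_t\); the double integral of \(\Cov_P(\mathbf 1_{A_s},\mathbf 1_{B_t})\) then gives \(|\Cov_P(f,g)|\le\kappa(P)(M-m)(N-n)\). For the mean absolute deviation, put \(c=\E_Ph\) and \(B=\{h>c\}\). Since \(\E_P(h-c)=0\), Lemma~\ref{lem:centred-covariance} gives
\[
\E_P|h-c|=2\E_P\bigl[(h-c)\mathbf 1_B\bigr]=2\Cov_P(h,\mathbf 1_B)=2\int_0^1\Cov_P(\mathbf 1_{A_t},\mathbf 1_B)\,dt\le2\kappa(P).
\]

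For optimality, choose events \(A_k\) with \(P(A_k)(1-P(A_k))\to\kappa(P)\) and take \(f_k=m+(M-m)\mathbf 1_{A_k}\). Direct computation gives \(\Var_P(f_k)=(M-m)^2P(A_k)(1-P(A_k))\) and \(\E_P|f_k-\E_Pf_k|=2(M-m)P(A_k)(1-P(A_k))\). For the covariance, take \(g_k=n+(N-n)\mathbf 1_{A_k}\), which gives \(\Cov_P(f_k,g_k)=(M-m)(N-n)P(A_k)(1-P(A_k))\). In each case the ratio to the stated bound tends to \(1\), so no smaller constant works for the fixed space.

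I expect the only delicate point to be the layer-cake/Fubini step. It needs the joint measurability just noted and boundedness, and both are routine here. The conceptual crux is recognising that the covariance is bilinear and that the normalised function \(h\) is an average of indicators, which is what brings the event-wise quantity \(\kappa(P)\) into play.
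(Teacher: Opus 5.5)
Your proof is correct and follows the paper's route: normalise $f$, write it by layer-cake as an average of level-set indicators, apply Fubini, bound each indicator covariance by $\kappa(P)$, and use indicators of near-extremal events for optimality. The differences are only local: you bound $\Cov_P(\mathbf 1_{A},\mathbf 1_{B})$ by Cauchy--Schwarz for arbitrary events rather than using nestedness of the level sets, you obtain the mean-deviation bound from $\E_P|h-c|=2\Cov_P(h,\mathbf 1_{\{h>c\}})$ instead of the triangle inequality inside the layer-cake integral, and you get the covariance bound by a double layer-cake rather than by Cauchy--Schwarz applied to the two variance bounds.
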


\begin{proof}
The assertion is immediate if \(M=m\), so suppose otherwise and put
\[
h:=\frac{f-m}{M-m}.
\]
Then \(0\le h\le1\). For \(t\in[0,1]\), let
\[
A_t:=\{h>t\}.
\]
The layer-cake representation and Fubini's theorem give
\[
\Var_P(h)
=
\int_0^1\int_0^1
\Cov_P(\mathbf 1_{A_s},\mathbf 1_{A_t})\,ds\,dt.
\]
The level sets are nested. If, for example, \(A_t\subseteq A_s\), then
\[
\Cov_P(\mathbf 1_{A_s},\mathbf 1_{A_t})
=
P(A_t)(1-P(A_s))
\le
P(A_t)(1-P(A_t))
\le
\kappa(P).
\]
It follows that \(\Var_P(h)\le\kappa(P)\).

The same layer-cake representation and the triangle inequality yield
\[
\begin{split}
\E_P|h-\E_P h|
&\le
\int_0^1
\E_P|\mathbf1_{A_t}-P(A_t)|\,dt\\[7pt]
&=
2\int_0^1P(A_t)(1-P(A_t))\,dt
\le2\kappa(P).
\end{split}
\]
Rescaling proves the two bounds for \(f\). They are optimal because, for every
\(A\in\mathcal A\), the choice \(h=\mathbf1_A\) gives
\[
\Var_P(h)=P(A)(1-P(A))
\]
and
\[
\E_P|h-\E_P h|=2P(A)(1-P(A)).
\]
Finally, the Cauchy--Schwarz inequality for covariance and the two variance
estimates give
\[
|\Cov_P(f,g)|
\le
\sqrt{\kappa(P)}(M-m)\sqrt{\kappa(P)}(N-n).
\]
Taking \(f=g=\mathbf1_A\), and then taking the supremum over \(A\), proves
the asserted optimality of the covariance coefficient.
The universal estimate follows from
\(
x(1-x)\le1/4
\)
for \(0\le x\le1\).
\end{proof}

\begin{theorem}[The universal Gr\"uss inequality]\label{thm:general-gruss}
Let \(f,g\in L^2(P)\) be real-valued functions, and assume that
\[
m\le f\le M,
\quad
n\le g\le N
\]
\(P\)-almost everywhere. Then
\[
|\Cov_P(f,g)|
\le
\frac14(M-m)(N-n).
\]
\end{theorem}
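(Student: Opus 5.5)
The plan is to combine the Cauchy--Schwarz inequality for covariance with the Popoviciu variance bound, both stated earlier. The hypothesis that \(f,g\in L^2(P)\) is what makes \(\Cov_P(f,g)\) and the variances well defined. It is in any case automatic, since the a.e.\ bounds \(m\le f\le M\) and \(n\le g\le N\) place \(f\) and \(g\) in \(L^\infty(P)\subseteq L^2(P)\) on a probability space. In particular \(fg\in L^1(P)\), so every quantity below is finite.

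First, apply Lemma~\ref{lem:cauchy-covariance} to \(f,g\in L^2(P)\), which gives
\[
|\Cov_P(f,g)|\le\sqrt{\Var_P(f)}\,\sqrt{\Var_P(g)}.
\]
Next, apply Lemma~\ref{lem:variance-bound} separately to \(f\) with bounds \(m,M\) and to \(g\) with bounds \(n,N\). This yields
\[
\Var_P(f)\le\frac{(M-m)^2}{4},
\qquad
\Var_P(g)\le\frac{(N-n)^2}{4}.
\]
Taking square roots, using \(M\ge m\) and \(N\ge n\), and multiplying gives the asserted bound \(\frac14(M-m)(N-n)\).

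Alternatively, the statement follows at once from Theorem~\ref{thm:measure-sensitive-gruss}, since its last inequality is exactly this bound once \(\kappa(P)\le1/4\). I would note this as a one-line proof and present the direct Cauchy--Schwarz/Popoviciu route above as the self-contained one.

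There is no genuine obstacle here. The only point needing care is checking the integrability hypotheses of the lemmas, which are the a.e.\ boundedness (giving \(L^\infty\)) and the finiteness of \(\Var_P\). Both follow from the hypotheses as noted above.
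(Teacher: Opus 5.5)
Your proof is correct, and your one-line alternative is exactly the paper's proof: the statement is recorded as the final inequality of Theorem~\ref{thm:measure-sensitive-gruss}, whose covariance bound is itself Cauchy--Schwarz for covariance followed by a variance bound. Your main route is the same chain, with Popoviciu's bound from Lemma~\ref{lem:variance-bound} in place of the layer-cake bound \(\Var_P(f)\le\kappa(P)(M-m)^2\); it is more self-contained, but it gives up the fixed-space coefficient \(\kappa(P)\le1/4\) that the paper's route yields along the way.
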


\begin{proof}
This is the final inequality in
Theorem~\ref{thm:measure-sensitive-gruss}.
\end{proof}

\begin{theorem}[A general pre-Gr\"uss inequality]\label{thm:general-pre-gruss}
Let \(f\in L^\infty(P)\) and \(g\in L^1(P)\) be real-valued functions. Assume that
\[
m\le f\le M
\]
\(P\)-almost everywhere. Then
\[
|\Cov_P(f,g)|
\le
\frac{M-m}{2}
\E_P|g-\E_P g|.
\]
If, moreover, \(g\in L^2(P)\), then
\[
|\Cov_P(f,g)|
\le
\frac{M-m}{2}\sqrt{\Var_P(g)}.
\]
\end{theorem}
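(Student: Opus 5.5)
The plan is to use the centred covariance identity, Lemma~\ref{lem:centred-covariance}, with the constant chosen as the midpoint of the range of \(f\). First, the covariance is well defined: \(f\) is bounded and \(g\in L^1(P)\), so \(fg\in L^1(P)\) because \(|fg|\le\max(|m|,|M|)\,|g|\). Hence the hypotheses of Lemma~\ref{lem:centred-covariance} hold. Taking
\[
c=\frac{m+M}{2},
\]
the lemma gives
\[
\Cov_P(f,g)=\E_P\bigl[(f-c)(g-\E_P g)\bigr].
\]

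Second, since \(m\le f\le M\) holds \(P\)-almost everywhere, we have \(|f-c|\le (M-m)/2\) almost everywhere. This is an \(L^\infty\)--\(L^1\) estimate applied to the product. Taking absolute values inside the expectation yields
\[
|\Cov_P(f,g)|\le \E_P\bigl[|f-c|\,|g-\E_P g|\bigr]\le \frac{M-m}{2}\,\E_P|g-\E_P g|,
\]
which is the first inequality.

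Third, for the second inequality, assume in addition that \(g\in L^2(P)\). The Cauchy--Schwarz inequality in \(L^2(P)\), or Jensen's inequality, applied to \(g-\E_P g\) and the constant \(1\) on the probability space gives
\[
\E_P|g-\E_P g|\le \bigl(\E_P(g-\E_P g)^2\bigr)^{1/2}.
\]
By Lemma~\ref{lem:centred-covariance} applied with \(f=g\), the right-hand side equals \(\sqrt{\Var_P(g)}\). Substituting this into the first inequality completes the argument.

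No step is a genuine obstacle. The only point needing care is the integrability bookkeeping, namely checking that \(fg\in L^1(P)\) so that the covariance and Lemma~\ref{lem:centred-covariance} apply when \(g\) is only integrable. The key choice is to centre \(f\) at the midpoint \(c=(m+M)/2\) rather than at \(\E_P f\); this is what produces the factor \((M-m)/2\) in place of a cruder bound such as \(M-m\).
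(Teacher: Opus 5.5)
Your proposal is correct and matches the paper's proof: the centred covariance identity with \(c=(M+m)/2\), the almost-everywhere bound \(|f-c|\le (M-m)/2\), and then Cauchy--Schwarz on the probability space for the \(L^2\) refinement. Your explicit check that \(fg\in L^1(P)\), which is needed before Lemma~\ref{lem:centred-covariance} can be applied, is a helpful detail that the paper leaves implicit.
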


\begin{proof}
In Lemma~\ref{lem:centred-covariance}, take
\[
c=\frac{M+m}{2}.
\]
Then
\[
\Cov_P(f,g)
=
\E_P\left[\left(f-\frac{M+m}{2}\right)(g-\E_P g)\right].
\]
Since
\[
\left|f-\frac{M+m}{2}\right|
\le
\frac{M-m}{2},
\]
we obtain
\[
|\Cov_P(f,g)|
\le
\frac{M-m}{2}
\E_P|g-\E_P g|.
\]
If \(g\in L^2(P)\), then the Cauchy--Schwarz inequality in the probability space
\((\Omega,\mathcal A,P)\) gives
\[
\E_P|g-\E_P g|
\le
\left(
\E_P|g-\E_P g|^2
\right)^{1/2}
=
\sqrt{\Var_P(g)}.
\]
\end{proof}

\section{The \texorpdfstring{\(\beta\)}{beta}-integral as a measure}
We now specialise the preceding facts to the \(\beta\)-integral considered in
\cite{HamzaEtAl,CardosoGeneral,CHM}. Let \(I\subseteq\mathbb R\) be an interval and
assume that \(\beta:I\to I\) is strictly increasing and continuous, has a
unique fixed point \(s_0\in I\), and satisfies the contractive sign condition
\[
(t-s_0)(\beta(t)-t)\le 0.
\]
Equivalently,
\(
(s_0-t)(\beta(t)-t)\ge0
\);
this is the convention consistent with the Jackson and Hahn maps.
Indeed, if \(t>s_0\), monotonicity and the sign condition make
\((\beta^k(t))_{k\ge0}\) decreasing and bounded below by \(s_0\); if
\(t<s_0\), they make it increasing and bounded above by \(s_0\).
Continuity of \(\beta\) makes either limit a fixed point, and uniqueness then
identifies it with \(s_0\). Consequently, for every \(t\in I\),
\[
\lim_{k\to\infty}\beta^k(t)=s_0.
\]
Let \(a,b\in I\), \(a<b\), and assume that
\[
a\le s_0\le b.
\]
Throughout the paper we use the convention
\(
\mathbb N=\{0,1,2,\ldots\}.
\)
We write
\[
[a,b]_\beta
:=
\{\beta^k(a):k\in\mathbb N\}
\cup
\{\beta^k(b):k\in\mathbb N\}
\cup\{s_0\}.
\]
We equip this countable set with its power-set \(\sigma\)-algebra, which is
also its relative Borel \(\sigma\)-algebra. All functions on the grid are
therefore measurable.
Only the values of the functions on this \(\beta\)-grid are relevant for the
measure-theoretic statements below.

Put
\[
x_{b,k}:=\beta^k(b),\quad
w_{b,k}:=\beta^k(b)-\beta^{k+1}(b),
\]
and
\[
x_{a,k}:=\beta^k(a),\quad
w_{a,k}:=\beta^{k+1}(a)-\beta^k(a).
\]
Then the positive \(\beta\)-integral on \([a,b]\) is integration with respect
to the finite purely atomic measure
\[
\mu_\beta
:=
\sum_{k=0}^{\infty}
\bigl(\beta^k(b)-\beta^{k+1}(b)\bigr)\,\delta_{\beta^k(b)}
-
\sum_{k=0}^{\infty}
\bigl(\beta^k(a)-\beta^{k+1}(a)\bigr)\,\delta_{\beta^k(a)}.
\]
Indeed, for \(b\ge s_0\), the iterates decrease towards \(s_0\), and therefore
\[
\beta^k(b)-\beta^{k+1}(b)\ge 0.
\]
Similarly, for \(a\le s_0\), the iterates increase towards \(s_0\), and therefore
\[
-\bigl(\beta^k(a)-\beta^{k+1}(a)\bigr)\ge 0.
\]
Thus \(\mu_\beta\) is a finite positive purely atomic measure, with at most
countable support. More precisely, a measurable function \(f\) is
\(\mu_\beta\)-integrable if and only if it is absolutely
\(\beta\)-integrable, and in that case
\[
\int_a^b f(x)\,d_\beta x
=
\int_{[a,b]_\beta} f\,d\mu_\beta,
\]
and
\[
\int_a^b |f(x)|\,d_\beta x
=
\int_{[a,b]_\beta}|f|\,d\mu_\beta.
\]
Conditionally convergent defining series are not Lebesgue integrals with
respect to \(\mu_\beta\) and are not used below. Moreover,
\[
\mu_\beta([a,b]_\beta)=b-a,
\]
because the two series telescope:
\[
\sum_{k=0}^{\infty}
\bigl(\beta^k(b)-\beta^{k+1}(b)\bigr)=b-s_0
\]
and
\[
-\sum_{k=0}^{\infty}
\bigl(\beta^k(a)-\beta^{k+1}(a)\bigr)=s_0-a.
\]
We define the associated probability measure by
\[
dP_\beta:=\frac{1}{b-a}\,d\mu_\beta.
\]

For interior Jackson and Hahn grids, a two-factor weighted-sequence
representation is already explicit in \cite{CardosoPetronilhoJackson}. The
point of the following proposition is its canonical formulation as an atomic
\(L^p\)-space, its extension to the general \(\beta\)-grid, and the
correction of the stationary-endpoint cases.

\begin{proposition}[Canonical weighted sequence-space model]
\label{prop:atomic-lp-model}
Let
\[
I_a:=\{k\in\mathbb N:w_{a,k}>0\},
\quad
I_b:=\{k\in\mathbb N:w_{b,k}>0\}.
\]
For \(1\le p<\infty\), restriction to the two orbits defines an isometric
isomorphism
\[
L^p(\mu_\beta)
\cong
\ell^p(I_b,w_b)\oplus_p\ell^p(I_a,w_a),
\]
where
\[
\|(y,z)\|^p
=
\sum_{k\in I_b}w_{b,k}|y_k|^p
+
\sum_{k\in I_a}w_{a,k}|z_k|^p.
\]
For \(p=\infty\), the corresponding identification is
\[
L^\infty(\mu_\beta)
\cong
\ell^\infty(I_b)\oplus_\infty\ell^\infty(I_a).
\]
In particular, the value assigned to \(f\) at \(s_0\) is immaterial because
\(\mu_\beta(\{s_0\})=0\).
\end{proposition}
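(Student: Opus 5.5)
The plan is to compute the measure of every singleton in \([a,b]_\beta\) and then apply the standard description of \(L^p\) of a purely atomic measure on a countable set with the power-set \(\sigma\)-algebra. The key subtlety is that the two orbits may overlap or be stationary. If \(b=s_0\), every \(\beta^k(b)\) equals \(s_0\) and every \(w_{b,k}=0\), so \(I_b=\varnothing\); likewise for \(a=s_0\). When \(b>s_0\), the sequence \(\beta^k(b)\) is strictly decreasing, because \(\beta\) is injective and has a unique fixed point, so \(\beta^{k+1}(b)=\beta^k(b)\) would force \(\beta^k(b)=s_0\) and hence \(b=s_0\). All \(w_{b,k}\) are then positive, the points \(x_{b,k}\) are distinct and lie in \((s_0,b]\), and \(I_b=\mathbb N\). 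Symmetrically, when \(a<s_0\) the points \(x_{a,k}\) are distinct, lie in \([a,s_0)\), and satisfy \(I_a=\mathbb N\). The two orbits are therefore disjoint, and neither meets \(s_0\) except in stationary cases, where the weights vanish.

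From this I would read off the measure of each atom:
\[
\mu_\beta(\{x_{b,k}\})=w_{b,k}\ (k\in I_b),\quad \mu_\beta(\{x_{a,k}\})=w_{a,k}\ (k\in I_a),\quad \mu_\beta(\{s_0\})=0.
\]
The last identity holds because any term of the defining series located at \(s_0\) comes from a stationary orbit and carries weight \(0\). Every other point of \([a,b]_\beta\) has this form, so \(\mu_\beta\) is concentrated on the disjoint atoms indexed by \(I_b\sqcup I_a\), each of positive mass.

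Next I would define the restriction map \(R f:=\bigl((f(x_{b,k}))_{k\in I_b},(f(x_{a,k}))_{k\in I_a}\bigr)\). For \(1\le p<\infty\), integrating the nonnegative function \(|f|^p\) against a countable sum of weighted Dirac masses, via monotone convergence, gives
\[
\int|f|^p\,d\mu_\beta=\sum_{I_b}w_{b,k}|f(x_{b,k})|^p+\sum_{I_a}w_{a,k}|f(x_{a,k})|^p .
\]
Hence \(R\) is well defined on \(\mu_\beta\)-classes, since two functions agree a.e.\ exactly when they agree on all positive-mass atoms. The identity also shows that \(R\) is isometric. For surjectivity, a pair \((y,z)\) is realised by the function equal to \(y_k\) at \(x_{b,k}\), to \(z_k\) at \(x_{a,k}\), and to an arbitrary value elsewhere; this function is well defined because the atoms are distinct, and it is measurable because the \(\sigma\)-algebra is the power set.

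For \(p=\infty\), the essential supremum with respect to an atomic measure equals the supremum over atoms of positive mass, which gives the \(\ell^\infty\) identification. The final remark of the proposition then follows from \(\mu_\beta(\{s_0\})=0\).

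The main obstacle I expect is the bookkeeping in the stationary-endpoint cases, which must ensure that no positive weight is attached twice to the same point and none is attached to \(s_0\). This rests entirely on the strict-monotonicity argument given above.
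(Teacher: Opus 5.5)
Your proposal is correct and follows essentially the same route as the paper. Both arguments show that the positive-weight atoms are distinct, because non-stationary orbits are injective and lie on opposite sides of \(s_0\). Both then write \(\|f\|_{L^p(\mu_\beta)}^p\) and the essential supremum as weighted sums and suprema over these atoms, obtain surjectivity by arbitrary assignment under the power-set \(\sigma\)-algebra, and obtain injectivity on \(\mu_\beta\)-a.e.\ classes. Your strict-monotonicity argument for the stationary endpoints only spells out what the paper states in one line.
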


\begin{proof}
The non-stationary orbit on either side of \(s_0\) is injective, and the two
orbits lie on opposite sides of \(s_0\); hence their positive-weight terms
label distinct atoms.
For \(1\le p<\infty\), the atomic representation gives
\[
\|f\|_{L^p(\mu_\beta)}^p
=
\sum_{k\in I_b}w_{b,k}|f(x_{b,k})|^p
+
\sum_{k\in I_a}w_{a,k}|f(x_{a,k})|^p.
\]
The analogous identity for the essential supremum proves the case
\(p=\infty\). The restriction map is onto because arbitrary sequences on the
positive-mass atoms define a function there; it may be defined arbitrarily
at all remaining grid points, including \(s_0\). Passing to equivalence
classes modulo \(\mu_\beta\)-almost-everywhere equality makes the map
injective.
\end{proof}

\begin{remark}[Atomic reduction and endpoint correction for the cited
\(L^p\)-spaces]\label{rem:Cardoso-Lp-reduction}
Proposition~\ref{prop:atomic-lp-model} places the functional-analytic claims of
\cite{CardosoGeneral,CardosoPetronilhoJackson} in their canonical atomic
setting. In the Jackson part of
\cite{CardosoPetronilhoJackson}, Proposition~3.1 is positivity of the atomic
measure, Theorem~3.4 is H\"older's inequality, Remark~3.5 is its
\(L^1\)-\(L^\infty\) endpoint, Theorem~3.6 is Minkowski's inequality, and
Corollary~3.7 is the resulting linearity. Theorem~3.8 is the standard
completeness result for \(1\le p\le\infty\), separability for
\(1\le p<\infty\), reflexivity for \(1<p<\infty\), and Hilbert-space
structure for \(p=2\) in the weighted sequence model. In the Hahn part,
Theorem~5.1 is the translation identity that transports the Jackson
measure, Proposition~6.1 is positivity of the transported measure, and
Theorem~6.3 repeats the same weighted-sequence consequences. For the general
operator, \cite[Proposition~2]{CardosoGeneral} is positivity of
\(\mu_\beta\), while \cite[Corollary~3.4]{CardosoGeneral} is the linear
structure inherited from the atomic sequence spaces. Its H\"older,
normed-space, completeness, separability, reflexivity and Hilbert-space
statements, reproduced as Theorems~E--G in \cite{CHM}, are respectively the
standard H\"older inequality and the corresponding weighted atomic
\(L^p\)-space properties, in the \(p\)-ranges stated above. Thus the \(L^p\)
inequalities and Banach-space conclusions in these results belong to
standard atomic \(L^p\)-theory; the operator-specific information lies in the orbit
representation and, in the Hahn case, in the translation formula.

There is, however, a zero-mass endpoint issue in the quotient and two-factor
isomorphisms printed in \cite{CardosoPetronilhoJackson}. Consider its
Jackson case with \(a=0<b\), and let
\[
h:=\mathbf1_{\{0\}}.
\]
The equivalence relation in its equation~(3.3) distinguishes \(h\) from
zero because \(aq^k=0\) for every \(k\), whereas
\[
\int_0^b |h(x)|^p\,d_q x=0
\]
for every \(1\le p<\infty\). Consequently, the expression in
\cite[equation~(3.4)]{CardosoPetronilhoJackson} is only a seminorm on the
printed quotient at this endpoint, and the inner product asserted for
\(p=2\) is degenerate. Moreover, the first coordinate of its evaluation map
is necessarily
\[
\bigl(f(0),f(0),f(0),\ldots\bigr),
\]
so that map cannot be onto the full product
\(\ell_q^p\times\ell_q^p\). Hence, for \(1\le p<\infty\), the endpoint
cases included in \cite[Theorem~3.8]{CardosoPetronilhoJackson} do not hold
as printed. The same argument with \(a=\omega_0<b\), or symmetrically
\(a<b=\omega_0\), applies to
\cite[Theorem~6.3]{CardosoPetronilhoJackson}. For \(p=\infty\),
the published supremum retains \(|f(s_0)|\); it may norm the finer
pointwise quotient, but it is not the essential-supremum norm determined by
the integral measure, and the stationary coordinate is still not onto a
full sequence-space factor.

The canonical correction is precisely the almost-everywhere quotient in
Proposition~\ref{prop:atomic-lp-model}: equality is imposed only at
positive-mass atoms. It discards \(s_0\), removes the zero-weight stationary
branch, and leaves one weighted sequence factor at an endpoint. When
\(a<s_0<b\), both branches have positive mass and the published two-factor
model is recovered exactly. The same correction applies to any
orbitwise formulation for the general \(\beta\)-operator that retains a
separate value at a stationary endpoint.
\end{remark}

\begin{remark}[Jackson and Hahn weights]
For the Jackson map \(\beta(t)=qt\), \(0<q<1\), one has \(s_0=0\) and
\[
w_{b,k}=(1-q)bq^k,
\quad
w_{a,k}=(1-q)(-a)q^k.
\]
For the Hahn map \(\beta(t)=qt+\omega\), with
\(s_0=\omega/(1-q)\), one has
\[
w_{b,k}=(1-q)(b-s_0)q^k,
\quad
w_{a,k}=(1-q)(s_0-a)q^k.
\]
When \(a<s_0<b\), the sequence models in
\cite{CardosoPetronilhoJackson} are exactly the affine specialisations of
Proposition~\ref{prop:atomic-lp-model}. At a stationary endpoint, the
zero-weight branch must instead be omitted, as in the canonical model
above.
\end{remark}

For later use, define the grid constant
\[
\kappa_\beta
:=
\kappa(P_\beta)
=
\sup_{A\subseteq[a,b]_\beta}
\frac{\mu_\beta(A)}{b-a}
\left(1-\frac{\mu_\beta(A)}{b-a}\right).
\]
Since \(P_\beta\) is countably atomic, list its atom masses as
\((p_j)_{j\ge1}\). Its attainable event probabilities form the set
\[
\left\{\sum_{j\ge1}\varepsilon_j p_j:
\varepsilon_j\in\{0,1\}\right\},
\]
which is compact because the series tails converge uniformly in
\((\varepsilon_j)\). Consequently, \(\kappa_\beta=1/4\) if and only if some
collection of \(\beta\)-atoms has total mass \((b-a)/2\); otherwise
\(\kappa_\beta<1/4\). This strict inequality occurs even when the fixed
point is interior. For
\[
\beta(t)=\frac t3
\quad\text{on}\quad
[a,b]=\left[-\frac16,1\right],
\]
one has \(s_0=0\), and the atom probabilities are
\[
P_\beta\!\left(\left\{3^{-k}\right\}\right)
=
\frac{4}{7}\,3^{-k},
\quad
P_\beta\!\left(\left\{-\frac{1}{6\,3^k}\right\}\right)
=
\frac{2}{21}\,3^{-k}.
\]
The first right atom has probability \(4/7\), while all the remaining atoms
together have probability \(3/7\). Hence every event has probability at
most \(3/7\) or at least \(4/7\), and
\[
\kappa_\beta=\frac{12}{49}<\frac14.
\]

\begin{definition}[The \(\beta\)-Chebyshev functional]
For real-valued functions \(f,g\in L^1(P_\beta)\) such that
\(fg\in L^1(P_\beta)\), define
\[
T_\beta(f,g)
:=
\frac{1}{b-a}\int_a^b f(x)g(x)\,d_\beta x
-
\left(
\frac{1}{b-a}\int_a^b f(x)\,d_\beta x
\right)
\left(
\frac{1}{b-a}\int_a^b g(x)\,d_\beta x
\right).
\]
\end{definition}

\begin{proposition}\label{prop:Tbeta-covariance}
For every real-valued \(f,g\in L^1(P_\beta)\) such that
\(fg\in L^1(P_\beta)\), one has
\[
T_\beta(f,g)=\Cov_{P_\beta}(f,g).
\]
In particular, if \(f\in L^2(P_\beta)\), then
\[
T_\beta(f,f)=\Var_{P_\beta}(f).
\]
\end{proposition}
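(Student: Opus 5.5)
The plan is to rewrite each of the three \(\beta\)-integrals in the definition of \(T_\beta\) as an expectation with respect to \(P_\beta\), using the atomic representation established above. First I check that the hypotheses match. Since \(dP_\beta=(b-a)^{-1}\,d\mu_\beta\) with \(0<b-a<\infty\), the spaces \(L^1(P_\beta)\) and \(L^1(\mu_\beta)\) coincide. So \(f,g,fg\in L^1(P_\beta)\) means exactly that \(f\), \(g\) and \(fg\) are \(\mu_\beta\)-integrable. By the equivalence recorded earlier, this is the same as absolute \(\beta\)-integrability. Hence each \(\beta\)-integral in \(T_\beta(f,g)\) is defined, and
\[
\int_a^b h(x)\,d_\beta x=\int_{[a,b]_\beta}h\,d\mu_\beta
\quad\text{for } h\in\{f,g,fg\}.
\]

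Next I divide by \(b-a\) and use the definition of \(P_\beta\):
\[
\frac{1}{b-a}\int_a^b h(x)\,d_\beta x=\E_{P_\beta}h .
\]
Substituting these three identities into the definition of \(T_\beta\) gives
\[
T_\beta(f,g)=\E_{P_\beta}(fg)-\E_{P_\beta}f\,\E_{P_\beta}g ,
\]
which is \(\Cov_{P_\beta}(f,g)\) by the definition in Section~2.

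For the variance statement, let \(f\in L^2(P_\beta)\). Because \(P_\beta\) is a probability measure, \(f\in L^1(P_\beta)\) by Cauchy--Schwarz, and \(f^2\in L^1(P_\beta)\). So the first part applies with \(g=f\), giving \(T_\beta(f,f)=\Cov_{P_\beta}(f,f)=\Var_{P_\beta}(f)\).

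No step is a real obstacle. The only point needing care is that the \(\beta\)-integral is taken in the absolute sense, so that it agrees with the Lebesgue integral against \(\mu_\beta\). The hypothesis \(fg\in L^1(P_\beta)\) is essential here, because a conditionally convergent defining series would not be covered by the atomic representation.
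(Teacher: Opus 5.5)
Your proposal is correct and takes the same approach as the paper. The paper also writes \(\E_{P_\beta}h=\frac{1}{b-a}\int_a^b h(x)\,d_\beta x\) for \(h=fg,f,g\) via the atomic representation, substitutes into the definition of covariance, and then takes \(g=f\); your checks that \(L^1(P_\beta)=L^1(\mu_\beta)\) and that \(f\in L^2(P_\beta)\) gives \(f,f^2\in L^1(P_\beta)\) are correct and make explicit what the paper leaves implicit.
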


\begin{proof}
For every \(h\in L^1(P_\beta)\), the definition of \(P_\beta\) and the
measure representation of the \(\beta\)-integral give
\[
\E_{P_\beta}h
=
\frac{1}{b-a}\int_a^b h(x)\,d_\beta x.
\]
Applying this identity successively to \(h=fg\), \(h=f\), and \(h=g\)
in the definition of covariance gives
\(\Cov_{P_\beta}(f,g)=T_\beta(f,g)\). Taking \(g=f\) gives the variance
identity.
\end{proof}

\section{Recovery of the \texorpdfstring{\(\beta\)}{beta}-Gr\"uss-type inequalities}

By Proposition~\ref{prop:Tbeta-covariance},
\(T_\beta=\Cov_{P_\beta}\). Hence the results of Section~2 apply
verbatim. The formulae below are recorded not as new
\(\beta\)-specific inequalities, but to identify exactly which statements
of \cite{CHM} are standard probability-space specialisations and to state
the integrability domains on which they are finite.
The product-integrability conditions below are essential:
\(f,g\in L^1(P_\beta)\) does not imply \(fg\in L^1(P_\beta)\), and
\(f\in L^1(P_\beta)\) does not imply that \(T_\beta(f,f)\) is finite.
Thus the \(L^2\) and conjugate-exponent assumptions also make explicit the
natural domains required by these formulae.
The pointwise grid bounds imposed in \cite{CHM} imply the
\(P_\beta\)-almost-everywhere bounds used below; the latter formulation
respects the zero mass of \(s_0\).

\begin{remark}[Why \(L^1\) is not sufficient]\label{rem:L1-domain}
Take \(\beta(t)=t/2\) on \([-1,1]\) and define, on the positive orbit,
\[
h(2^{-k}):=\frac{2^{k/2}}{\sqrt{k+1}},
\quad k\in\mathbb N,
\]
with \(h=0\) on the negative orbit. The positive-orbit
\(\mu_\beta\)-weights are \(2^{-k-1}\), or equivalently the
\(P_\beta\)-weights are \(2^{-k-2}\). Hence
\[
\sum_{k=0}^\infty2^{-k-1}|h(2^{-k})|<\infty,
\quad
\sum_{k=0}^\infty2^{-k-1}|h(2^{-k})|^2=\infty.
\]
Thus \(h\in L^1(P_\beta)\setminus L^2(P_\beta)\), so \(T_\beta(h,h)\) is
not finite. More precisely, the absolute \(L^1_\beta\) assumptions in
Definition~3.2 and Lemma~3.4 of \cite{CHM} do not ensure
\(fg\in L^1_\beta\); Proposition~3.3 does not ensure
\(f^2\in L^1_\beta\); Proposition~3.6 does not ensure
\(f^2,g^2\in L^1_\beta\), or even \(fg\in L^1_\beta\); and
Corollary~3.7 does not ensure \(g^2\in L^1_\beta\). The first estimate in
Theorem~3.8 is well defined because \(f\) is bounded and \(g\in L^1_\beta\),
but its second estimate again requires \(g\in L^2_\beta\). The results below
recover these statements on the explicit product- and square-integrability
domains required by their displayed expressions.
\end{remark}

\begin{corollary}[Variance identity]\label{cor:beta-variance-identity}
For every real-valued \(f\in L^2(P_\beta)\),
\[
T_\beta(f,f)
=
\frac{1}{b-a}\int_a^b f^2(x)\,d_\beta x
-
\left(
\frac{1}{b-a}\int_a^b f(x)\,d_\beta x
\right)^2.
\]
Moreover,
\[
T_\beta(f,f)
=
\frac{1}{b-a}\int_a^b
\left(
 f(x)-\frac{1}{b-a}\int_a^b f(t)\,d_\beta t
\right)^2
\,d_\beta x.
\]
In particular,
\[
T_\beta(f,f)\ge 0.
\]
\end{corollary}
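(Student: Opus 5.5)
The argument is a direct specialisation of Proposition~\ref{prop:Tbeta-covariance} and the definition of variance, so I would proceed in three short steps.

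\emph{Step 1: the first formula.} Since \(f\in L^2(P_\beta)\) and \(P_\beta\) is a probability measure, Cauchy--Schwarz gives \(f\in L^1(P_\beta)\). Trivially \(f\cdot f=f^2\in L^1(P_\beta)\), so \(T_\beta(f,f)\) is defined. Proposition~\ref{prop:Tbeta-covariance} then yields \(T_\beta(f,f)=\Var_{P_\beta}(f)=\E_{P_\beta}f^2-(\E_{P_\beta}f)^2\). Rewriting each expectation through the identity
\[
\E_{P_\beta}h=\frac{1}{b-a}\int_a^b h(x)\,d_\beta x,\qquad h\in L^1(P_\beta),
\]
applied to \(h=f^2\) and to \(h=f\), gives the first displayed formula.

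\emph{Step 2: the centred formula.} Apply Lemma~\ref{lem:centred-covariance} with \(g=f\) and \(c=\E_{P_\beta}f\). This gives \(\Var_{P_\beta}(f)=\E_{P_\beta}(f-\E_{P_\beta}f)^2\). Here \((f-\E_{P_\beta}f)^2\) is integrable because it is bounded by \(2f^2+2(\E_{P_\beta}f)^2\), and constants are integrable for the finite measure. Translating the expectation back into the \(\beta\)-integral by the same identity, with \(\E_{P_\beta}f\) written as \(\frac{1}{b-a}\int_a^b f(t)\,d_\beta t\), gives the second formula.

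\emph{Step 3: non-negativity.} The integrand in the second formula is non-negative and the measure \(\mu_\beta\) is positive, so the integral is non-negative, and hence \(T_\beta(f,f)\ge0\).

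There is no real obstacle. The only point needing care is integrability: every \(\beta\)-integral written must be understood as an absolutely convergent \(\mu_\beta\)-integral, which the \(L^2\) hypothesis guarantees. This is precisely the domain issue flagged in Remark~\ref{rem:L1-domain}.
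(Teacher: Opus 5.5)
Your proposal is correct and follows the paper's route: the paper's proof simply observes that the corollary is the identity \(T_\beta(f,f)=\Var_{P_\beta}(f)\) from Proposition~\ref{prop:Tbeta-covariance} written in \(\beta\)-integral notation, and your three steps spell out the centring, integrability and positivity that this implicitly uses. One small simplification: the first display is literally the definition of \(T_\beta\) with \(g=f\), so Step~1 only needs the observation that \(f\in L^2(P_\beta)\) makes both \(f\) and \(f^2\) integrable, and the detour through the Proposition is unnecessary.
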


\begin{proof}
This is the identity \(T_\beta(f,f)=\Var_{P_\beta}(f)\), written in
\(\beta\)-integral notation.
\end{proof}

\begin{corollary}[Korkine identity and H\"older estimate for \(T_\beta\)]
\label{cor:beta-korkine-holder}
Let \(f,g\in L^1(P_\beta)\) and assume that
\(fg\in L^1(P_\beta)\). Then
\[
T_\beta(f,g)
=
\frac{1}{2(b-a)^2}
\int_a^b\int_a^b
(f(x)-f(y))(g(x)-g(y))\,d_\beta x\,d_\beta y.
\]
If, in addition, \(p,q>1\) satisfy
\[
\frac1p+\frac1q=1,
\]
and \(f\in L^p(P_\beta)\), \(g\in L^q(P_\beta)\), then
\begin{align*}
|T_\beta(f,g)|
&\le
\frac12
\left(
\frac{1}{(b-a)^2}
\int_a^b\int_a^b |f(x)-f(y)|^p\,d_\beta x\,d_\beta y
\right)^{1/p} \\[7pt]
&\quad \times
\left(
\frac{1}{(b-a)^2}
\int_a^b\int_a^b |g(x)-g(y)|^q\,d_\beta x\,d_\beta y
\right)^{1/q}.
\end{align*}
\end{corollary}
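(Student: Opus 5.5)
The plan is to transfer Lemma~\ref{lem:korkine} and Lemma~\ref{lem:product-holder} to \(P_\beta\) by Proposition~\ref{prop:Tbeta-covariance}. The only additional ingredient is the rewriting of product-space integrals against \(P_\beta\otimes P_\beta\) as iterated \(\beta\)-integrals. So I will first apply Proposition~\ref{prop:Tbeta-covariance} to replace \(T_\beta(f,g)\) by \(\Cov_{P_\beta}(f,g)\). This is legitimate under the stated hypotheses \(f,g,fg\in L^1(P_\beta)\). Lemma~\ref{lem:korkine} with \(P=P_\beta\) then gives the Korkine identity as a double integral against \(dP_\beta(x)\,dP_\beta(y)\).

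Next I will convert the double integral. Since \(dP_\beta=(b-a)^{-1}d\mu_\beta\), the product measure is \(P_\beta\otimes P_\beta=(b-a)^{-2}\mu_\beta\otimes\mu_\beta\). This produces the factor \(1/(2(b-a)^2)\). The product of two finite purely atomic measures is again purely atomic, with atoms \((x,y)\) of mass \(\mu_\beta(\{x\})\mu_\beta(\{y\})\). The proof of Lemma~\ref{lem:korkine} already shows that the integrand \((f(x)-f(y))(g(x)-g(y))\) is absolutely integrable on the product space. Fubini's theorem therefore writes the product integral as an iterated integral. Each inner integral is an absolutely convergent \(\mu_\beta\)-integral, and by the measure representation of Section~3 it equals the corresponding \(\beta\)-integral. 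The outer integrand, as a function of \(y\), is again \(\mu_\beta\)-integrable, so the outer integral is a \(\beta\)-integral as well. This yields the displayed iterated \(\int_a^b\int_a^b\cdots\,d_\beta x\,d_\beta y\).

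For the H\"older estimate, I will apply the second part of Lemma~\ref{lem:product-holder} with \(P=P_\beta\), \(f\in L^p(P_\beta)\) and \(g\in L^q(P_\beta)\). H\"older's inequality then gives \(fg\in L^1(P_\beta)\), so the first part of the corollary applies. The two factors \(\iint|f(x)-f(y)|^p\,dP_\beta\,dP_\beta\) and \(\iint|g(x)-g(y)|^q\,dP_\beta\,dP_\beta\) are finite by the bound \(|f(x)-f(y)|^p\le2^{p-1}(|f(x)|^p+|f(y)|^p)\). Since their integrands are nonnegative, Tonelli's theorem and the same measure conversion turn them into the normalised iterated \(\beta\)-integrals with the factor \(1/(b-a)^2\).

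The main obstacle is the justification that iterated \(\beta\)-integrals coincide with product-measure integrals. The paper explicitly warns that conditionally convergent \(\beta\)-series are not \(\mu_\beta\)-integrals, so the reasoning must rely on the absolute integrability already established in Lemmas~\ref{lem:korkine} and~\ref{lem:product-holder}. In the H\"older part, nonnegativity lets Tonelli handle this step without further conditions.
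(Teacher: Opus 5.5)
Your proposal is correct and follows the paper's own proof: the paper simply applies Lemma~\ref{lem:korkine} and Lemma~\ref{lem:product-holder} to \(P_\beta\) and rescales via \(dP_\beta=(b-a)^{-1}\,d\mu_\beta\). Your Fubini--Tonelli argument, showing that the product-measure integrals equal the iterated, absolutely convergent \(\beta\)-integrals, fills in a step the paper leaves implicit.
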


\begin{proof}
The identity is Lemma~\ref{lem:korkine}, and the estimate under the
conjugate-exponent assumptions is Lemma~\ref{lem:product-holder}, both
applied to \(P_\beta\) and using
\[
dP_\beta=\frac{1}{b-a}\,d\mu_\beta.
\]
\end{proof}

\begin{corollary}[The \(\beta\)-Gr\"uss inequality]\label{cor:beta-gruss}
Let \(f,g\in L^2(P_\beta)\), and assume that there exist real constants
\(m,M,n,N\) such that
\[
m\le f\le M,
\quad
n\le g\le N,
\]
\(P_\beta\)-almost everywhere. Then
\[
|T_\beta(f,g)|
\le
\kappa_\beta(M-m)(N-n)
\le
\frac14(M-m)(N-n).
\]
\end{corollary}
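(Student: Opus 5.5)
The plan is to reduce the statement to Theorem~\ref{thm:measure-sensitive-gruss} applied to the probability space \(([a,b]_\beta,\mathcal P([a,b]_\beta),P_\beta)\), and then translate back through Proposition~\ref{prop:Tbeta-covariance}. First, I will check that the hypotheses of that theorem hold. Since \(m\le f\le M\) and \(n\le g\le N\) hold \(P_\beta\)-almost everywhere, both functions lie in \(L^\infty(P_\beta)\); because \(P_\beta\) is a probability measure, this already implies \(f,g\in L^2(P_\beta)\) and \(fg\in L^1(P_\beta)\). The \(L^2\) hypothesis in the statement is therefore consistent but automatic. In particular, \(T_\beta(f,g)\) is well defined.

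Next, Proposition~\ref{prop:Tbeta-covariance} gives \(T_\beta(f,g)=\Cov_{P_\beta}(f,g)\). Theorem~\ref{thm:measure-sensitive-gruss} then yields
\[
|\Cov_{P_\beta}(f,g)|\le\kappa(P_\beta)(M-m)(N-n)\le\frac14(M-m)(N-n).
\]
By definition, \(\kappa(P_\beta)=\kappa_\beta\), which is the supremum over subsets \(A\) of the grid, because the \(\sigma\)-algebra on \([a,b]_\beta\) is the power set. Substituting these identities gives the displayed chain of inequalities. The final inequality \(\kappa_\beta\le 1/4\) comes from \(x(1-x)\le 1/4\) on \([0,1]\), as already recorded in Theorem~\ref{thm:measure-sensitive-gruss}.

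There is no substantive obstacle. The only point needing care is to match the measure-theoretic setting: the bounds are imposed \(P_\beta\)-a.e., so the value at \(s_0\), which has zero mass, plays no role, and \(\kappa_\beta\) coincides with \(\kappa(P_\beta)\) for the power-set \(\sigma\)-algebra. I would add a remark that the coefficient \(\kappa_\beta\) is optimal on the fixed grid. This follows by taking \(f=g=\mathbf 1_A\) for \(A\subseteq[a,b]_\beta\), exactly as in the optimality part of Theorem~\ref{thm:measure-sensitive-gruss}. The example \(\beta(t)=t/3\) on \([-1/6,1]\) then shows that the first inequality can be strict.
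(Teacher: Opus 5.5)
Your proposal is correct and is essentially the paper's proof, which applies Theorem~\ref{thm:measure-sensitive-gruss} to \(P_\beta\) via the identification \(T_\beta=\Cov_{P_\beta}\) of Proposition~\ref{prop:Tbeta-covariance}; your observation that the a.e.\ bounds already give \(f,g\in L^\infty(P_\beta)\subseteq L^2(P_\beta)\) is also right. One small slip in your closing remark: the example \(\beta(t)=t/3\) on \([-1/6,1]\), where \(\kappa_\beta=12/49\), shows that the second inequality \(\kappa_\beta(M-m)(N-n)\le\frac14(M-m)(N-n)\) can be strict, not the first (the first is in fact attained there by \(f=g=\mathbf 1_{\{1\}}\)).
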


\begin{proof}
This is Theorem~\ref{thm:measure-sensitive-gruss} applied to \(P_\beta\).
\end{proof}

\begin{corollary}[A sharp Gr\"uss bound with one non-negative factor]
\label{cor:CHM-415}
Let \(f,g\) be real-valued functions on the \(\beta\)-grid such that
\[
m\le f\le M,
\quad
0\le g\le G.
\]
Then
\[
\begin{split}
\left|
\int_a^b f(x)g(x)\,d_\beta x
-
\frac{1}{b-a}
\int_a^b f(x)\,d_\beta x
\int_a^b g(x)\,d_\beta x
\right|
&\le
\kappa_\beta(M-m)G(b-a)\\[7pt]
&\le
\frac14(M-m)G(b-a).
\end{split}
\]
The universal coefficient \(1/4\) is sharp.
\end{corollary}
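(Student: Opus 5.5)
Multiplying through by \(1/(b-a)\), the left-hand side equals \((b-a)\,|T_\beta(f,g)|=(b-a)|\Cov_{P_\beta}(f,g)|\) by Proposition~\ref{prop:Tbeta-covariance}. Here \(f,g\) are bounded on the grid, hence in \(L^\infty(P_\beta)\subseteq L^2(P_\beta)\), and \(fg\) is bounded as well. The grid bounds hold pointwise, so they hold \(P_\beta\)-almost everywhere. We may therefore apply Theorem~\ref{thm:measure-sensitive-gruss} (equivalently Corollary~\ref{cor:beta-gruss}) with \(n=0\), \(N=G\). This gives
\[
(b-a)|\Cov_{P_\beta}(f,g)|\le(b-a)\kappa_\beta(M-m)(G-0),
\]
which is the first inequality. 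The second follows from \(\kappa_\beta\le1/4\). The sign hypothesis \(g\ge0\) plays no role beyond fixing \(n=0\); the content is simply the Gr\"uss bound with the range of \(g\) equal to \([0,G]\).

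The real work is the sharpness of the universal coefficient \(1/4\). The natural reading is that \(1/4\) cannot be replaced by any smaller constant valid for all admissible \(\beta\), \(a,b\), \(f,g\). Since the fixed-grid coefficient \(\kappa_\beta\) is optimal by Theorem~\ref{thm:measure-sensitive-gruss}, it suffices to exhibit grids with \(\kappa_\beta=1/4\), witnessed by an event \(A\) of probability exactly \(1/2\).

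For this I would take the Jackson map \(\beta(t)=qt\) on the symmetric interval \([-b,b]\), with \(s_0=0\). The positive orbit then carries total mass \(b\), which is exactly half of \(\mu_\beta([a,b]_\beta)=2b\). Take \(A\) to be the positive orbit and \(f=g=\mathbf 1_A\), so \(m=0\), \(M=1\), \(G=1\). Then
\[
\int_a^b fg\,d_\beta x=b,\qquad \frac{1}{2b}\int_a^b f\,d_\beta x\int_a^b g\,d_\beta x=\frac{b^2}{2b}=\frac b2,
\]
so the left-hand side equals \(b/2\). The right-hand side with coefficient \(1/4\) equals \(\tfrac14\cdot1\cdot1\cdot2b=b/2\), so equality holds.

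Alternatively, any grid with a single dominant atom of probability \(1/2\) would also work. The main obstacle is only interpretive: one must make precise that sharpness refers to the universal coefficient over all grids, because on a fixed grid, such as the \(\beta(t)=t/3\) example, \(\kappa_\beta\) may be strictly smaller than \(1/4\).
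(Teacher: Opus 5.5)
Your proof is correct and is essentially the paper's. You identify the left-hand side with \((b-a)|\Cov_{P_\beta}(f,g)|\), apply Corollary~\ref{cor:beta-gruss} with \(n=0\), \(N=G\), and get sharpness from \(f=g=\mathbf 1_A\) with \(P_\beta(A)=1/2\) on a symmetric Jackson grid. The one difference is the choice of \(A\): the paper takes \(\beta(t)=t/2\) on \([-1,1]\) with \(A=\{-1,1\}\), so that \(g\) is also continuous at \(s_0=0\), which the following remark uses when comparing with the earlier Corollary~4.15. Your positive-orbit indicator jumps at \(s_0\); that is harmless for the statement as posed, but it would not serve that comparison.
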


\begin{proof}
The expression on the left is
\[
(b-a)|T_\beta(f,g)|.
\]
Apply Corollary~\ref{cor:beta-gruss} with \(n=0\) and \(N=G\).
For sharpness, take
\[
\beta(t)=\frac t2,\quad [a,b]=[-1,1],
\]
and let \(f=g=\mathbf1_{\{-1,1\}}\) on the \(\beta\)-grid. Each outer
atom has \(P_\beta\)-mass \(1/4\), so their union has mass \(1/2\). The
left-hand side is therefore \(1/2\), which is exactly
\(\frac14(M-m)G(b-a)\). The function \(g\), extended by zero away from
\(\{-1,1\}\), is non-negative and continuous at the fixed point \(s_0=0\).
\end{proof}

\begin{remark}[The coefficient in Corollary~4.15 of \cite{CHM}]
Corollary~4.15 of \cite{CHM} gives the same left-hand side with coefficient
\(1/2\) and claims that this coefficient is best possible. Under its hypotheses,
\(g\) is non-negative and bounded by \(\|g\|_\infty\), so
Corollary~\ref{cor:CHM-415} applies with \(G=\|g\|_\infty\). Hence \(1/2\)
can always be replaced by \(1/4\), and the preceding example shows the latter
coefficient is the sharp universal one. On the common domain
\(a<s_0<b\), applying Theorem~3.1 of \cite{CHM} with \(n=0\) and
\(N=\|g\|_\infty\) gives the same correction; the endpoint cases follow
from Corollary~\ref{cor:CHM-415}. Thus the claim that \(1/2\) is best
possible in Corollary~4.15 is incompatible with that earlier result.

There is also an independent problem in the proof of that corollary. It asserts
that
\[
G(x):=\int_a^x g(t)\,d_\beta t
\]
is non-decreasing on \([a,b]\) whenever \(g\ge0\). Positivity of each
fixed-interval \(\beta\)-integral does not imply this assertion because the
orbit grids vary with \(x\). For example, take
\[
\beta(t)=\frac t2
\quad\text{on}\quad
[-1,2],
\quad
g=\mathbf1_{\{1\}}.
\]
Then \(g\) is non-negative, bounded and continuous at \(s_0=0\), but direct
evaluation gives the following. The negative orbit contributes zero and,
for \(q=1/2\),
\[
G(x)
=
\frac{x}{2}
\sum_{k=0}^{\infty}2^{-k}
\mathbf1_{\{1\}}\left(x\,2^{-k}\right).
\]
For \(x=1\), only \(k=0\) contributes; for \(x=3/2\), no non-negative
integer \(k\) satisfies \((3/2)2^{-k}=1\). Hence
\[
G(1)=\frac12,
\quad
G\left(\frac32\right)=0.
\]
Thus the claimed monotonicity on the whole interval is unavailable under
the printed hypotheses. What remains true, and is sufficient for the
Riemann--Stieltjes \(\beta\)-integral, is monotonicity on the fixed
\(\beta\)-grid: it follows directly from the first-difference identity
along each orbit. The proof can therefore be repaired by restricting the
monotonicity statement to that grid. The covariance argument above
bypasses this issue altogether.
\end{remark}

\begin{corollary}[A grid-sensitive one-sided Gr\"uss bound]
\label{cor:beta-one-sided}
Let \(f,g\in L^2(P_\beta)\), and assume that there exist real constants \(m,M\)
such that
\[
m\le f\le M
\]
\(P_\beta\)-almost everywhere. Then
\[
|T_\beta(f,g)|
\le
\sqrt{\kappa_\beta}\,(M-m)\sqrt{T_\beta(g,g)}
\le
\frac{M-m}{2}\sqrt{T_\beta(g,g)}.
\]
\end{corollary}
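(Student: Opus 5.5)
The argument runs through the covariance representation $T_\beta=\Cov_{P_\beta}$ of Proposition~\ref{prop:Tbeta-covariance}, followed by the Cauchy--Schwarz inequality for covariance and the sharp variance bound of Theorem~\ref{thm:measure-sensitive-gruss}.

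First we check the domain. Since $f,g\in L^2(P_\beta)$ and $P_\beta$ is a probability measure, $f,g\in L^1(P_\beta)$. Moreover $fg\in L^1(P_\beta)$ by Cauchy--Schwarz, so $T_\beta(f,g)=\Cov_{P_\beta}(f,g)$ and $T_\beta(g,g)=\Var_{P_\beta}(g)$ are both finite.

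Next, Lemma~\ref{lem:cauchy-covariance} gives
\[
|T_\beta(f,g)|\le\sqrt{\Var_{P_\beta}(f)}\,\sqrt{T_\beta(g,g)}.
\]
The almost-everywhere bounds $m\le f\le M$ place $f$ in $L^\infty(P_\beta)$. The variance part of Theorem~\ref{thm:measure-sensitive-gruss}, applied to $P=P_\beta$, then gives $\Var_{P_\beta}(f)\le\kappa_\beta(M-m)^2$. Taking square roots yields the first inequality.

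The second inequality follows from $\kappa_\beta\le 1/4$, which was established in that theorem because $x(1-x)\le 1/4$. Hence $\sqrt{\kappa_\beta}\le 1/2$.

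No step is a genuine obstacle. The only point needing care is the integrability of $fg$ and the finiteness of $T_\beta(g,g)$, and the $L^2$ hypothesis handles both.
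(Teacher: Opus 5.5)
Your proposal is correct and follows the paper's proof: $T_\beta=\Cov_{P_\beta}$, then Cauchy--Schwarz for covariance, then the variance bound $\Var_{P_\beta}(f)\le\kappa_\beta(M-m)^2$ from Theorem~\ref{thm:measure-sensitive-gruss}, with $\kappa_\beta\le 1/4$ giving the second inequality. Your explicit check that the $L^2$ hypothesis makes $fg\in L^1(P_\beta)$ and $T_\beta(g,g)$ finite is a useful addition, since the paper leaves this implicit.
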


\begin{proof}
This follows from the Cauchy--Schwarz inequality for covariance and
Theorem~\ref{thm:measure-sensitive-gruss} applied to \(f\), namely
\[
T_\beta(f,f)=\Var_{P_\beta}(f)
\le \kappa_\beta(M-m)^2.
\]
\end{proof}

\begin{corollary}[The \(\beta\)-pre-Gr\"uss inequality]\label{cor:beta-pre-gruss}
Let \(f\in L^\infty(P_\beta)\) and \(g\in L^1(P_\beta)\). Assume that there
exist real constants \(m,M\) such that
\[
m\le f\le M
\]
\(P_\beta\)-almost everywhere. Then
\[
|T_\beta(f,g)|
\le
\frac{M-m}{2}\,\frac{1}{b-a}
\int_a^b
\left|
g(x)-\frac{1}{b-a}\int_a^b g(t)\,d_\beta t
\right|d_\beta x.
\]
If, moreover, \(g\in L^2(P_\beta)\), then
\[
|T_\beta(f,g)|
\le
\sqrt{\kappa_\beta}\,(M-m)\sqrt{T_\beta(g,g)}
\le
\frac{M-m}{2}\sqrt{T_\beta(g,g)}.
\]
\end{corollary}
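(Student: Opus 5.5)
The plan is to transfer both estimates from the general probability-space results of Section~2 to \(P_\beta\) through Proposition~\ref{prop:Tbeta-covariance}. The first step is to check the domains. Since \(f\in L^\infty(P_\beta)\) and \(g\in L^1(P_\beta)\), we have \(fg\in L^1(P_\beta)\) and \(f\in L^1(P_\beta)\), the latter because \(P_\beta\) is a probability measure. Hence \(T_\beta(f,g)\) is defined and equals \(\Cov_{P_\beta}(f,g)\).

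For the first inequality I would invoke Theorem~\ref{thm:general-pre-gruss} with \(P=P_\beta\), which gives
\[
|\Cov_{P_\beta}(f,g)|\le \frac{M-m}{2}\,\E_{P_\beta}|g-\E_{P_\beta}g|.
\]
Rewriting this in \(\beta\)-integral notation is then routine. Using \(dP_\beta=(b-a)^{-1}d\mu_\beta\) and the measure representation of the \(\beta\)-integral for absolutely integrable functions, we get \(\E_{P_\beta}g=(b-a)^{-1}\int_a^b g\,d_\beta t\). The function \(|g-\E_{P_\beta}g|\) is \(\mu_\beta\)-integrable, so its expectation equals the displayed normalised \(\beta\)-integral.

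For the second inequality, with \(g\in L^2(P_\beta)\) as well, I would not use the second part of Theorem~\ref{thm:general-pre-gruss}, since it only yields \((M-m)/2\). I would instead argue as in Corollary~\ref{cor:beta-one-sided}. Because \(f\) is bounded, \(f\in L^2(P_\beta)\), so Lemma~\ref{lem:cauchy-covariance} gives \(|\Cov_{P_\beta}(f,g)|\le\sqrt{\Var_{P_\beta}(f)}\sqrt{\Var_{P_\beta}(g)}\). Theorem~\ref{thm:measure-sensitive-gruss} gives \(\Var_{P_\beta}(f)\le\kappa_\beta(M-m)^2\), and Corollary~\ref{cor:beta-variance-identity} identifies \(\Var_{P_\beta}(g)=T_\beta(g,g)\). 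The final comparison is simply \(\kappa_\beta\le 1/4\), so \(\sqrt{\kappa_\beta}\le 1/2\).

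There is no real obstacle here. The only point needing care is that the sharper \(\sqrt{\kappa_\beta}\) coefficient comes from the variance bound on \(f\), not from the pre-Grüss centring argument. The first (\(L^1\)) estimate keeps the universal factor \((M-m)/2\), and the statement does not claim a \(\kappa_\beta\)-refinement for it.
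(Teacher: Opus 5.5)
Your proposal is correct and follows the paper's proof: the first bound is Theorem~\ref{thm:general-pre-gruss} applied to \(P_\beta\) and rewritten in \(\beta\)-integral notation. The second bound is Cauchy--Schwarz for covariance combined with \(\Var_{P_\beta}(f)\le\kappa_\beta(M-m)^2\) from Theorem~\ref{thm:measure-sensitive-gruss} and \(\Var_{P_\beta}(g)=T_\beta(g,g)\), with your explicit domain checks (\(fg\in L^1(P_\beta)\), \(f\in L^2(P_\beta)\)) filling in what the paper leaves implicit.
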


\begin{proof}
This is Theorem~\ref{thm:general-pre-gruss} applied to \(P_\beta\). Indeed,
\[
\int
\left|g-\int g\,dP_\beta\right|dP_\beta
=
\frac{1}{b-a}
\int_a^b
\left|
g(x)-\frac{1}{b-a}\int_a^b g(t)\,d_\beta t
\right|d_\beta x.
\]
The second pair of inequalities follows from the Cauchy--Schwarz inequality
for covariance,
Theorem~\ref{thm:measure-sensitive-gruss}, and
\[
\Var_{P_\beta}(g)=T_\beta(g,g).
\]
\end{proof}

This accounts for all the displayed identities and inequalities in
Section~3 of \cite{CHM}, once they are restricted to the product- and
square-integrability domains on which the expressions are defined. Its
Proposition~3.3 says precisely that
\(T_\beta(f,f)=\Var_{P_\beta}(f)\ge0\). Its Lemma~3.4 is Korkine's identity
for the probability measure \(P_\beta\), and its Proposition~3.5 is
H\"older's inequality on \(P_\beta\otimes P_\beta\). Applying that
inequality with exponent \(2\) to the two centred differences gives
Proposition~3.6, which is exactly Cauchy--Schwarz for covariance. Expanding
the expectations into the two orbit sums does not alter their mathematical
content.

Once those identifications are made, the complete argument for
Theorem~3.1 is the standard chain
\[
\begin{split}
|T_\beta(f,g)|
&=
|\Cov_{P_\beta}(f,g)|\\[7pt]
&\le
\sqrt{\Var_{P_\beta}(f)}
\sqrt{\Var_{P_\beta}(g)}\\[7pt]
&\le
\kappa_\beta(M-m)(N-n)\\[7pt]
&\le
\frac14(M-m)(N-n).
\end{split}
\]
The universal coefficient \(1/4\) is therefore the classical Gr\"uss
constant and is independent of the \(\beta\)-orbits. It is sharp uniformly
over the class of admissible grids: the interior example in the proof of
Corollary~\ref{cor:CHM-415} already attains equality. The exact coefficient
on a prescribed grid, however, is \(\kappa_\beta\). Thus the statement in
\cite[Theorem~3.1]{CHM} that \(1/4\) is best possible is correct only with
the uniform quantifier over the whole class. On the interior grid
\[
\beta(t)=\frac t3,
\quad
[a,b]=\left[-\frac16,1\right],
\]
the exact coefficient is \(12/49<1/4\), so \(1/4\) is not best possible
for that fixed grid. Corollary~3.7 is the same first Cauchy--Schwarz step
with only the variance of \(f\) bounded: its exact fixed-grid factor is
\(\sqrt{\kappa_\beta}\), whereas \(1/2\) is the corresponding uniform
factor.

Likewise, Theorem~3.8 follows by choosing the midpoint
\(c=(M+m)/2\) in the centred covariance identity:
\[
\begin{split}
|T_\beta(f,g)|
&=
\left|
\E_{P_\beta}
\left[
\left(f-c\right)
\left(g-\E_{P_\beta}g\right)
\right]
\right|\\[7pt]
&\le
\frac{M-m}{2}
\E_{P_\beta}
\left|g-\E_{P_\beta}g\right|\\[7pt]
&\le
\frac{M-m}{2}
\sqrt{\Var_{P_\beta}(g)}.
\end{split}
\]
The last line requires \(g\in L^2(P_\beta)\); the first one only requires
\(g\in L^1(P_\beta)\). This distinction illustrates why the
product- and square-integrability corrections are substantive rather than
formal. Definition~3.2 and Lemma~3.4 of \cite{CHM} do not guarantee the
integrability of \(fg\); Proposition~3.3 does not guarantee that of \(f^2\);
Proposition~3.6 does not guarantee the products or squares it uses; and
Corollary~3.7 and the second bound of Theorem~3.8 do not guarantee that
\(g^2\) is integrable.

Consequently, no inequality in Section~3 of \cite{CHM} requires a
\(\beta\)-calculus argument. The only operator-specific input is the
construction of positive weights whose total mass is \(b-a\); after that
single fact, the asserted \(\beta\)-analogues are the corresponding
probability-space theorems written in orbit notation. The terminology
``\(\beta\)-Gr\"uss inequality'' therefore identifies a specialisation, not
a new inequality mechanism.

\begin{remark}[The probabilistic application in Section~5 of \cite{CHM}]
The probability model underlying Section~5 of \cite{CHM} is precisely the
atomic probability space determined by \(P_\beta\). Thus
\(p_{a,b}=\E_{P_\beta}X\), and the three
normalised integrals used there are
\(\E_{P_\beta}f(X)\), \(\E_{P_\beta}g(X)\), and
\(\E_{P_\beta}(fg)(X)\). Equation~(5.2) is consequently the ordinary
Gr\"uss covariance bound of Corollary~\ref{cor:beta-gruss}. In particular,
the probability model in
Section~5 of that paper already contains the representation that reduces its
Section~3 to the general results above.

The same formulation also exposes a problem in the subsequent convex
application. Equation~(5.3) is obtained by multiplying separate
Hermite--Hadamard bounds for \(f\) and \(g\), but multiplication does not
preserve those inequalities without an appropriate sign hypothesis. A
counterexample is given by the Jackson map
\[
\beta(t)=\frac t2
\quad\text{on}\quad
[-1,1],
\]
with
\[
f(x)=-1+\frac{x^2}{10},
\quad
g(x)=x^2.
\]
Both functions are convex, continuous and absolutely
\(\beta\)-integrable, and they satisfy all the hypotheses stated before
(5.3). Here
\[
P_\beta\bigl(\{2^{-k}\}\bigr)
=
P_\beta\bigl(\{-2^{-k}\}\bigr)
=
2^{-k-2},
\quad k\in\mathbb N.
\]
Thus \(p_{a,b}=\E_{P_\beta}X=0\), and
\[
\E_{P_\beta}X^2
=
2\sum_{k=0}^{\infty}2^{-k-2}2^{-2k}
=
\frac47,
\quad
\E_{P_\beta}X^4
=
2\sum_{k=0}^{\infty}2^{-k-2}2^{-4k}
=
\frac{16}{31}.
\]
The ranges of \(f\) and \(g\) have lengths \(1/10\) and \(1\), so the
Gr\"uss correction in (5.3) is \(1/40\). Its lower
Hermite--Hadamard product is \(f(0)g(0)=0\), while its upper endpoint
product is \((-9/10)(1)=-9/10\). Its middle term is
\[
\E_{P_\beta}\bigl(f(X)g(X)\bigr)
=
-\frac47+\frac1{10}\frac{16}{31}
=
-\frac{564}{1085}.
\]
Thus (5.3) asserts
\[
-\frac1{40}
\le
-\frac{564}{1085}
\le
-\frac78,
\]
so both asserted bounds fail. The final unnumbered Jackson specialisation
inherits the same defect. Non-negativity of both functions would be a
sufficient additional condition for the multiplication step, but it is
not among the printed hypotheses.
\end{remark}

\section{The Riemann--Stieltjes \texorpdfstring{\(\beta\)}{beta}-integral}

The same measure-theoretic viewpoint applies to the Riemann--Stieltjes
\(\beta\)-integral, but the positive and signed cases involve different
mechanisms. When the measure induced by the integrator is positive, the
preceding covariance argument applies after normalisation. When the induced
measure is signed, the
estimates are no longer covariance estimates; they follow instead from
domination by the total variation measure.

Whenever the following series defines a finite signed measure, the
Riemann--Stieltjes \(\beta\)-measure associated with a real-valued function
\(u\) is the signed purely atomic measure
\[
\nu_u
:=
\sum_{k=0}^{\infty}
\bigl(u(\beta^k(b))-u(\beta^{k+1}(b))\bigr)\,\delta_{\beta^k(b)}
-
\sum_{k=0}^{\infty}
\bigl(u(\beta^k(a))-u(\beta^{k+1}(a))\bigr)\,\delta_{\beta^k(a)}.
\]
For every \(|\nu_u|\)-integrable function \(f\), Definition~4.1 of
\cite{CHM} is exactly
\[
\int_a^b f(x)\,d_\beta u(x)
=
\int_{[a,b]_\beta} f\,d\nu_u.
\]
There is no additional atom at \(s_0\) in this definition. Finiteness of
\(\nu_u\) already makes the two series of increments absolutely summable,
so both orbit limits
\[
\ell_b:=\lim_{k\to\infty}u(\beta^k(b)),
\quad
\ell_a:=\lim_{k\to\infty}u(\beta^k(a))
\]
exist, and telescoping gives the total mass
\[
\nu_u([a,b]_\beta)
=
u(b)-u(a)-(\ell_b-\ell_a).
\]
When \(a<s_0<b\), \cite{CHM} denotes these limits by
\(\ell_b=u(s_0^+)\) and \(\ell_a=u(s_0^-)\). The jump appearing in their
formulae is therefore part of the value of the total mass; it is not a
separate contribution to the integral.

\begin{proposition}[The positive Riemann--Stieltjes case]
\label{prop:RS-positive-case}
Assume that \(\nu_u\) is a finite positive measure and that
\(
0<\nu_u([a,b]_\beta)<\infty.
\)
Let
\[
dP_u
:=
\frac{1}{\nu_u([a,b]_\beta)}\,d\nu_u.
\]
For real-valued functions \(f,g\in L^1(P_u)\) such that
\(fg\in L^1(P_u)\), define
\[
\begin{aligned}
T_u(f,g)
&:=
\frac{1}{\nu_u([a,b]_\beta)}
\int_a^b f(x)g(x)\,d_\beta u(x)\\[7pt]
&\phantom{:={}}-
\frac{1}{\nu_u([a,b]_\beta)^2}
\int_a^b f(x)\,d_\beta u(x)
\int_a^b g(x)\,d_\beta u(x).
\end{aligned}
\]
Then
\[
T_u(f,g)=\Cov_{P_u}(f,g).
\]
\end{proposition}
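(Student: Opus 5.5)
The plan mirrors the proof of Proposition~\ref{prop:Tbeta-covariance}, with \(\nu_u\) in place of \(\mu_\beta\) and the total mass \(\nu_u([a,b]_\beta)\) in place of \(b-a\). Write \(c:=\nu_u([a,b]_\beta)\), so that \(0<c<\infty\). Then \(P_u=c^{-1}\nu_u\) is a probability measure on the countable grid with its power-set \(\sigma\)-algebra, and Section~2 applies to it.

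The first step is to check that the integrability classes coincide. Since \(\nu_u\) is positive, \(|\nu_u|=\nu_u\). Because \(c\) is a positive finite constant, a function \(h\) is \(|\nu_u|\)-integrable if and only if \(h\in L^1(P_u)\). In particular, the hypotheses \(f,g,fg\in L^1(P_u)\) guarantee that each of the three Riemann--Stieltjes \(\beta\)-integrals appearing in \(T_u(f,g)\) is defined through the identity
\[
\int_a^b h(x)\,d_\beta u(x)=\int_{[a,b]_\beta}h\,d\nu_u .
\]
We rely only on this atomic representation and never on conditionally convergent orbit series.

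Next, for every \(h\in L^1(P_u)\) this representation gives
\[
\E_{P_u}h=\frac1c\int_{[a,b]_\beta}h\,d\nu_u=\frac1c\int_a^b h(x)\,d_\beta u(x).
\]
We apply this successively to \(h=fg\), \(h=f\) and \(h=g\) in the definition of \(\Cov_{P_u}(f,g)\). The first term becomes \(c^{-1}\int_a^b fg\,d_\beta u\). The product of the two means becomes \(c^{-2}\int_a^b f\,d_\beta u\int_a^b g\,d_\beta u\). This is exactly \(T_u(f,g)\).

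There is no real obstacle here; the argument is bookkeeping. The only point needing care is that the normalising constant is the total mass \(c=u(b)-u(a)-(\ell_b-\ell_a)\) computed earlier, not \(u(b)-u(a)\). The jump at \(s_0\) therefore enters only through \(c\) and not as an extra atom, which is why the definition of \(P_u\) already uses \(\nu_u([a,b]_\beta)\).
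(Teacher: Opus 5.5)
Your proposal is correct and follows the paper's proof: both use the atomic representation to write \(\E_{P_u}h=\nu_u([a,b]_\beta)^{-1}\int_a^b h(x)\,d_\beta u(x)\) and then substitute \(h=fg\), \(h=f\) and \(h=g\) into the definition of \(\Cov_{P_u}(f,g)\). Your explicit check that \(L^1(P_u)\) coincides with the \(|\nu_u|\)-integrable class, because \(|\nu_u|=\nu_u\), is a small clarification that the paper leaves implicit.
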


\begin{proof}
The definition of \(P_u\) gives
\[
\int h\,dP_u
=
\frac{1}{\nu_u([a,b]_\beta)}
\int_a^b h(x)\,d_\beta u(x).
\]
Therefore,
\begin{align*}
\Cov_{P_u}(f,g)
&=
\int fg\,dP_u
-
\int f\,dP_u\int g\,dP_u\\[7pt]
&=
\frac{1}{\nu_u([a,b]_\beta)}
\int_a^b f(x)g(x)\,d_\beta u(x)\\[7pt]
&\quad-
\frac{1}{\nu_u([a,b]_\beta)^2}
\int_a^b f(x)\,d_\beta u(x)
\int_a^b g(x)\,d_\beta u(x)\\[7pt]
&=
T_u(f,g).
\end{align*}
The stated consequences are therefore exactly the results of Section~2
applied to the probability measure \(P_u\).
\end{proof}

\begin{corollary}[Riemann--Stieltjes \(\beta\)-Gr\"uss estimates]
\label{cor:RS-positive-gruss}
Assume that \(\nu_u\) is a finite positive measure and that
\(0<\nu_u([a,b]_\beta)<\infty\).
If \(f,g\in L^2(P_u)\) and
\[
m\le f\le M,
\quad
n\le g\le N
\]
\(P_u\)-almost everywhere, then
\[
|T_u(f,g)|
\le
\kappa(P_u)(M-m)(N-n)
\le
\frac14(M-m)(N-n).
\]
Moreover, if \(f\in L^\infty(P_u)\), \(g\in L^1(P_u)\), and
\[
m\le f\le M
\]
\(P_u\)-almost everywhere, then
\[
|T_u(f,g)|
\le
\frac{M-m}{2}
\int
\left|g-\int g\,dP_u\right|dP_u.
\]
If, in addition, \(g\in L^2(P_u)\), then
\[
|T_u(f,g)|
\le
\sqrt{\kappa(P_u)}\,(M-m)\sqrt{T_u(g,g)}
\le
\frac{M-m}{2}\sqrt{T_u(g,g)}.
\]
\end{corollary}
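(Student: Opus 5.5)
The plan is to reduce everything to the probability-space results of Section~2, applied to the normalised measure \(P_u\). Proposition~\ref{prop:RS-positive-case} already gives \(T_u(f,g)=\Cov_{P_u}(f,g)\) whenever \(f,g,fg\in L^1(P_u)\). So the first step is to check that each displayed expression is defined under the stated hypotheses.

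For the first estimate, \(f,g\in L^2(P_u)\) gives \(fg\in L^1(P_u)\) by Cauchy--Schwarz. For the second, \(f\in L^\infty(P_u)\) and \(g\in L^1(P_u)\) give \(fg\in L^1(P_u)\) by the \(L^\infty\)-\(L^1\) H\"older bound. Positivity and finiteness of \(\nu_u\), together with \(0<\nu_u([a,b]_\beta)<\infty\), make \(P_u\) a genuine probability measure on the countable grid with its power-set \(\sigma\)-algebra. This is the only operator-specific input.

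With the identification in hand, the first chain of inequalities is Theorem~\ref{thm:measure-sensitive-gruss} applied to \(P_u\). Since \(f,g\) are essentially bounded, they lie in \(L^\infty(P_u)\). The theorem then gives \(|\Cov_{P_u}(f,g)|\le\kappa(P_u)(M-m)(N-n)\), and \(\kappa(P_u)\le 1/4\) comes from \(x(1-x)\le1/4\).

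The pre-Gr\"uss bound with the mean absolute deviation is the first assertion of Theorem~\ref{thm:general-pre-gruss} for \(P_u\). For the final pair, assuming in addition \(g\in L^2(P_u)\), there are three steps:
\begin{itemize}
\item apply Lemma~\ref{lem:cauchy-covariance} to get \(|\Cov_{P_u}(f,g)|\le\sqrt{\Var_{P_u}(f)}\sqrt{\Var_{P_u}(g)}\);
\item bound \(\Var_{P_u}(f)\le\kappa(P_u)(M-m)^2\) by Theorem~\ref{thm:measure-sensitive-gruss}, noting that \(f\in L^\infty\subseteq L^2\) on a probability space;
\item identify \(\Var_{P_u}(g)=T_u(g,g)\), which is again Proposition~\ref{prop:RS-positive-case} with \(f=g\), valid because \(g^2\in L^1(P_u)\).
\end{itemize}
The last inequality uses \(\sqrt{\kappa(P_u)}\le 1/2\).

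There is no real obstacle here, only bookkeeping. The one point needing care is that the hypotheses are almost-everywhere with respect to \(P_u\), which is exactly what the Section~2 results require. Atoms of zero \(\nu_u\)-mass, including possibly stationary ones, play no role, so no pointwise grid bounds are needed.
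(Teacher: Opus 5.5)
Your proposal is correct and follows the paper's own argument. You identify \(T_u=\Cov_{P_u}\) via Proposition~\ref{prop:RS-positive-case}, then apply Theorem~\ref{thm:measure-sensitive-gruss}, Theorem~\ref{thm:general-pre-gruss}, and Cauchy--Schwarz for covariance combined with the variance bound; your explicit integrability checks are exactly the bookkeeping the paper's one-line proof leaves implicit.
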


\begin{proof}
This follows from Proposition~\ref{prop:RS-positive-case}, together with
Theorem~\ref{thm:measure-sensitive-gruss} and
Theorem~\ref{thm:general-pre-gruss}.
\end{proof}

\begin{lemma}[Total-variation domination]
\label{lem:total-variation-domination}
Let \(\nu\) be a finite signed or complex measure on a measurable space. Then,
for every \(|\nu|\)-integrable function \(f\),
\[
\left|\int f\,d\nu\right|
\le
\int |f|\,d|\nu|.
\]
\end{lemma}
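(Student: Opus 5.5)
The plan is to pass through the Radon--Nikodym (polar) decomposition of \(\nu\) relative to its total variation. This works uniformly for signed and complex measures.

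First, recall that for a finite signed or complex measure \(\nu\), the total variation \(|\nu|\) is a finite positive measure with \(\nu\ll|\nu|\). There is then a measurable function \(h\) with \(|h|=1\) \(|\nu|\)-almost everywhere and \(d\nu=h\,d|\nu|\). In the signed case one can be completely explicit: take a Hahn decomposition \(\Omega=\Omega^+\cup\Omega^-\), so that \(\nu^\pm\) are the restrictions of \(\pm\nu\) to \(\Omega^\pm\) and \(|\nu|=\nu^++\nu^-\). One may then choose \(h=\mathbf 1_{\Omega^+}-\mathbf 1_{\Omega^-}\).

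Second, I will verify that \(f\in L^1(|\nu|)\) makes \(\int f\,d\nu\) well defined. By definition, \(\int f\,d\nu=\int fh\,d|\nu|\), and \(|fh|=|f|\) almost everywhere. In the signed case this is the same as \(\int f\,d\nu^+-\int f\,d\nu^-\), and both terms are finite because \(\nu^\pm\le|\nu|\).

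Third, the estimate follows from the triangle inequality for the integral with respect to the positive measure \(|\nu|\):
\[
\left|\int f\,d\nu\right|=\left|\int fh\,d|\nu|\right|\le\int|f|\,|h|\,d|\nu|=\int|f|\,d|\nu|.
\]
In the signed case one can avoid \(h\) entirely:
\[
\left|\int f\,d\nu^+-\int f\,d\nu^-\right|\le\int|f|\,d\nu^++\int|f|\,d\nu^-=\int|f|\,d|\nu|.
\]
In the complex case, the triangle inequality \(\bigl|\int F\,d\lambda\bigr|\le\int|F|\,d\lambda\) for complex \(F\) and positive \(\lambda\) is proved in the standard way. Choose a unimodular \(\alpha\) with \(\alpha\int F\,d\lambda\ge0\). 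Then
\[
\left|\int F\,d\lambda\right|=\int\operatorname{Re}(\alpha F)\,d\lambda\le\int|F|\,d\lambda.
\]

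The only step requiring any care is the existence of the density \(h\) with \(|h|=1\) in the complex case. I would quote it as the standard polar decomposition: \(\nu\ll|\nu|\) gives a Radon--Nikodym density \(h\), and the defining property of \(|\nu|\) forces \(|h|=1\) almost everywhere. For the application here, \(\nu_u\) is a real, purely atomic measure on a countable set. So the argument can even be made trivial: \(|\nu_u|\) places mass \(|\nu_u(\{x\})|\) at each atom, and the inequality is the triangle inequality for an absolutely convergent series.
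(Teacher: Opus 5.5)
Your proof is correct and essentially matches the paper, which disposes of this lemma in one line as ``the defining domination property of the total variation measure''; your polar decomposition \(d\nu=h\,d|\nu|\) with \(|h|=1\) (or the Hahn decomposition in the signed case), followed by the triangle inequality for the positive measure \(|\nu|\), is the standard way of making that property precise. Your closing observation is also accurate and is all the paper actually uses: for the purely atomic \(\nu_u\), the inequality reduces to the triangle inequality for an absolutely convergent series.
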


\begin{proof}
This is the defining domination property of the total variation measure.
\end{proof}

\begin{proposition}[The \(\beta\)-Lipschitz condition as measure domination]
\label{prop:lipschitz-RS}
Suppose that \(u\) is real-valued and
\(\beta\)-Lipschitz with constant \(L\) on the grid, that is,
\(L\ge0\) and
\[
|u(x)-u(\beta(x))|\le L|x-\beta(x)|
\quad (x\in[a,b]_\beta).
\]
Then both orbit limits of \(u\) exist, \(\nu_u\) is a finite signed measure,
and \(|\nu_u|\le L\mu_\beta\). Consequently, every absolutely
\(\beta\)-integrable function \(f\) satisfies
\[
\left|\int_a^b f(x)\,d_\beta u(x)\right|
\le
L\int_a^b |f(x)|\,d_\beta x.
\]
Conversely, whenever \(\nu_u\) is defined, the domination
\(
|\nu_u|\le L\mu_\beta
\)
implies the displayed \(\beta\)-\(L\)-Lipschitz condition on the grid.
\end{proposition}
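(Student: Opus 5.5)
The argument is atomwise: on a countable grid with no atom at \(s_0\), both \(\nu_u\) and \(\mu_\beta\) are determined by their masses at the orbit points. Their total variations are also computed atom by atom.

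\emph{Step 1: finiteness and orbit limits.} At \(x_{b,k}\) the increment is
\[
\nu_u(\{x_{b,k}\})=u(\beta^k(b))-u(\beta^{k+1}(b)).
\]
Applying the Lipschitz hypothesis at \(x=\beta^k(b)\) gives
\[
|u(\beta^k(b))-u(\beta^{k+1}(b))|\le L\,w_{b,k},
\]
and similarly \(|u(\beta^k(a))-u(\beta^{k+1}(a))|\le L\,w_{a,k}\) on the left orbit. Since \(\sum_k w_{b,k}=b-s_0\) and \(\sum_k w_{a,k}=s_0-a\) by telescoping, both increment series are absolutely summable. Hence the partial sums \(u(b)-u(\beta^{n}(b))\) converge, so \(\ell_b\) and \(\ell_a\) exist, and \(\nu_u\) is a finite signed measure. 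Its total variation is at most \(L(b-a)\).

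\emph{Step 2: the domination \(|\nu_u|\le L\mu_\beta\).} Coincident labels must be handled before comparing atoms. A stationary endpoint, such as \(a=s_0\), gives zero increments and zero weights, so it contributes nothing on either side. Apart from this, the two orbits lie on opposite sides of \(s_0\) and are injective, as in Proposition~\ref{prop:atomic-lp-model}. Therefore each grid point carries a single increment of \(\nu_u\) and a single weight of \(\mu_\beta\).

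For a purely atomic signed measure,
\[
|\nu_u|(A)=\sum_{x\in A}|\nu_u(\{x\})|.
\]
The atomwise bound from Step 1 then gives \(|\nu_u|(A)\le L\mu_\beta(A)\) for every \(A\subseteq[a,b]_\beta\). I expect this step to be the main obstacle, and only in bookkeeping: one has to confirm that no point is counted on both orbits and that \(s_0\) carries no mass in either measure.

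\emph{Step 3: the integral estimate.} Let \(f\) be absolutely \(\beta\)-integrable, so \(f\in L^1(\mu_\beta)\). By Step 2, \(f\in L^1(|\nu_u|)\), and Lemma~\ref{lem:total-variation-domination} gives
\[
\left|\int f\,d\nu_u\right|\le\int|f|\,d|\nu_u|\le L\int|f|\,d\mu_\beta.
\]
The measure representations of the two \(\beta\)-integrals then yield the displayed inequality.

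\emph{Step 4: the converse.} Assume \(|\nu_u|\le L\mu_\beta\). Apply it to the singleton \(\{x\}\) for a grid point \(x=\beta^k(b)\) or \(x=\beta^k(a)\):
\[
|u(x)-u(\beta(x))|=|\nu_u(\{x\})|\le L\,\mu_\beta(\{x\})=L|x-\beta(x)|.
\]
At a stationary point, including \(x=s_0\), we have \(\beta(x)=x\), so both sides vanish. This recovers the \(\beta\)-\(L\)-Lipschitz condition at every point of the grid.
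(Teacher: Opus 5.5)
Your proposal is correct and follows essentially the same atomwise argument as the paper. You bound each increment by \(L\) times the corresponding weight, use telescoping to get absolute summability (hence the orbit limits and finiteness of \(\nu_u\)), and read off \(|\nu_u|\le L\mu_\beta\) atom by atom; you then apply total-variation domination, and obtain the converse by evaluating at singletons, with \(s_0\) giving \(0\le0\). Your explicit check that the two orbits label distinct atoms is bookkeeping the paper leaves implicit, via the proof of Proposition~\ref{prop:atomic-lp-model}.
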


\begin{proof}
Along either orbit, the sum of the absolute increments of \(u\) is bounded
by \(L\) times the corresponding telescoping sum of distances, and is
therefore finite. Hence both orbit sequences have finite total variation and
converge. The same estimate shows that the series defining \(\nu_u\) has
finite total variation.

At every atom on the \(b\)-orbit, the mass of \(|\nu_u|\) is
\[
|u(\beta^k(b))-u(\beta^{k+1}(b))|
\le
L\bigl(\beta^k(b)-\beta^{k+1}(b)\bigr),
\]
and the analogous inequality on the \(a\)-orbit is
\[
|u(\beta^k(a))-u(\beta^{k+1}(a))|
\le
L\bigl(\beta^{k+1}(a)-\beta^k(a)\bigr).
\]
These are precisely the atomic inequalities
\(
|\nu_u|\le L\mu_\beta.
\)
The integral estimate now follows from
Lemma~\ref{lem:total-variation-domination}.
Conversely, domination evaluated at each positive-mass atom gives the two
increment inequalities displayed above. Every grid point other than
\(s_0\) is such an atom, while the inequality at \(s_0\) is the identity
\(0\le0\). Hence \(u\) is \(\beta\)-Lipschitz with constant \(L\) on the
grid.
\end{proof}

\begin{theorem}[Centred Riemann--Stieltjes reduction]
\label{thm:RS-beta-gruss-centred}
Suppose that \(u\) is \(\beta\)-Lipschitz with constant \(L\) on the grid.
Let
\(f\in L^\infty(P_\beta)\) be real-valued and assume that
\[
m\le f\le M
\]
\(P_\beta\)-almost everywhere. Then
\[
\begin{split}
\left|
\int_a^b f(x)\,d_\beta u(x)
-
\frac{\nu_u([a,b]_\beta)}{b-a}
\int_a^b f(t)\,d_\beta t
\right|
&\le
2L\kappa_\beta(M-m)(b-a)\\[7pt]
&\le
\frac12L(M-m)(b-a).
\end{split}
\]
For \(L>0\) and a fixed \(\beta\)-grid, the coefficient
\(2\kappa_\beta\) in the first estimate is optimal: it is the smallest
coefficient valid for every bounded real \(f\) and every
\(\beta\)-\(L\)-Lipschitz integrator \(u\).
\end{theorem}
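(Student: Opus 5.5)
The plan is to rewrite the left-hand side as the integral of a centred function against \(\nu_u\), and then to use the domination \(|\nu_u|\le L\mu_\beta\) from Proposition~\ref{prop:lipschitz-RS}. Put \(\bar f:=\E_{P_\beta}f=\frac{1}{b-a}\int_a^b f\,d_\beta t\). Since \(f\) is bounded \(P_\beta\)-a.e., it is bounded at every positive-mass atom. The atom \(s_0\) has no \(\nu_u\)-mass, so \(f\) is \(|\nu_u|\)-integrable. The left-hand side then equals
\[
\int_{[a,b]_\beta}(f-\bar f)\,d\nu_u .
\]
By Lemma~\ref{lem:total-variation-domination} and the domination, its modulus is at most
\[
L\int|f-\bar f|\,d\mu_\beta = L(b-a)\,\E_{P_\beta}|f-\bar f|.
\]
Theorem~\ref{thm:measure-sensitive-gruss} gives \(\E_{P_\beta}|f-\bar f|\le 2\kappa_\beta(M-m)\). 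This yields the first estimate, and \(\kappa_\beta\le 1/4\) yields the second.

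For optimality, fix \(L>0\) and \(A\subseteq[a,b]_\beta\). The aim is a pair \((f,u)\) attaining \(2L\kappa_\beta'(M-m)(b-a)\) with \(\kappa_\beta'=P_\beta(A)(1-P_\beta(A))\); taking the supremum over \(A\) then finishes the argument. Take \(f=\mathbf 1_A\), so \(M-m=1\) and \(\bar f=P_\beta(A)\). The integral \(\int(f-\bar f)\,d\nu\) is largest when \(d\nu=L\,\mathrm{sgn}(f-\bar f)\,d\mu_\beta\). That is, \(\nu\) should have density \(L\) on \(A\) and \(-L\) off \(A\). The result is
\[
L\int|f-\bar f|\,d\mu_\beta=2L(b-a)P_\beta(A)(1-P_\beta(A)).
\]
Choosing \(A\) with \(P_\beta(A)\) near the maximiser shows that no coefficient smaller than \(2\kappa_\beta\) works. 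The supremum is in fact attained, by the compactness remark preceding the definition of \(T_\beta\), though attainment is not needed.

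The main obstacle is realising this extremal signed measure as \(\nu_u\) for a genuine \(\beta\)-\(L\)-Lipschitz \(u\). Since \(\nu_u\) is prescribed atom by atom through the increments, \(u\) can be constructed orbitwise. On the \(b\)-orbit I set \(u(b):=0\) and define recursively
\[
u(\beta^{k+1}(b)) := u(\beta^k(b)) - \varepsilon_{b,k}L\,w_{b,k},
\]
with \(\varepsilon_{b,k}=\pm1\) the sign assigned to the atom \(\beta^k(b)\). On the \(a\)-orbit I set \(u(a):=0\) and
\[
u(\beta^{k+1}(a)) := u(\beta^k(a)) + \varepsilon_{a,k}L\,w_{a,k}.
\]
This matches the minus sign in the definition of \(\nu_u\). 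The increments then satisfy \(|u(x)-u(\beta(x))|=L|x-\beta(x)|\), so \(u\) is \(\beta\)-\(L\)-Lipschitz on the grid, and \(u(s_0)\) may be set arbitrarily.

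Some bookkeeping remains. At a stationary endpoint, for example \(a=s_0\), that orbit is the constant sequence \(s_0\) with zero weights, so it imposes nothing. If \(u\) must be defined on all of \(I\), it suffices to extend it off the grid, for instance by any Lipschitz interpolation. The converse part of Proposition~\ref{prop:lipschitz-RS} confirms that \(|\nu_u|=L\mu_\beta\), which gives equality in the total-variation step. Equality in the mean-absolute-deviation step holds because \(f\) is an indicator, as in the optimality part of Theorem~\ref{thm:measure-sensitive-gruss}.
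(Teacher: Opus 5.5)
Your proof is correct and follows the paper's argument essentially step by step. You use the same centring, the domination \(|\nu_u|\le L\mu_\beta\) with the mean-absolute-deviation bound of Theorem~\ref{thm:measure-sensitive-gruss}, and the same orbitwise sign construction of \(u\) giving \(\nu_u=L(\mathbf 1_A-\mathbf 1_{A^c})\mu_\beta\); the only difference is that you take a supremum over \(A\) rather than an attained maximiser.

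One small correction concerns the optional extension to \(I\). With \(u(a)=u(b)=0\) the two orbit limits generally differ, so the extension is only of bounded variation, as in the paper, not Lipschitz. A Lipschitz extension exists only if you choose the initial values so that the two limits coincide.
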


\begin{proof}
The expression inside the absolute value is exactly
\[
\int_{[a,b]_\beta}
\left(
f-\E_{P_\beta}f
\right)d\nu_u.
\]
Proposition~\ref{prop:lipschitz-RS} and total-variation domination give
\[
\left|
\int
\left(f-\E_{P_\beta}f\right)d\nu_u
\right|
\le
L(b-a)\E_{P_\beta}|f-\E_{P_\beta}f|.
\]
Theorem~\ref{thm:measure-sensitive-gruss} now yields
\[
\E_{P_\beta}|f-\E_{P_\beta}f|
\le2\kappa_\beta(M-m)
\le\frac12(M-m),
\]
which proves both estimates.

It remains to prove fixed-grid optimality. The compact subset-sum argument
used in the definition of \(\kappa_\beta\) shows that there is an event
\(A\), made up of positive-mass atoms, such that
\[
p:=P_\beta(A)
\quad\text{satisfies}\quad
p(1-p)=\kappa_\beta.
\]
Put
\[
\sigma:=\mathbf1_A-\mathbf1_{A^c}.
\]
The signed measure \(L\sigma\mu_\beta\) is induced by a
\(\beta\)-\(L\)-Lipschitz integrator. To see this directly, write
\[
x_{b,k}:=\beta^k(b),
\quad
x_{a,k}:=\beta^k(a),
\]
and choose initial values \(U_{b,0}\) and \(U_{a,0}\). On the
positive-mass atoms define recursively
\[
\begin{aligned}
U_{b,k+1}
&:=
U_{b,k}-L\sigma(x_{b,k})w_{b,k},\\[7pt]
U_{a,k+1}
&:=
U_{a,k}+L\sigma(x_{a,k})w_{a,k}.
\end{aligned}
\]
At a stationary endpoint the zero-weight branch is omitted; equivalently,
all values on that branch may be set equal to \(u(s_0)\).
The two series of increments converge absolutely because
\[
\sum_k w_{b,k}=b-s_0,
\quad
\sum_k w_{a,k}=s_0-a.
\]
Hence \(u(x_{b,k}):=U_{b,k}\) and \(u(x_{a,k}):=U_{a,k}\), with an arbitrary
value at the zero-mass point \(s_0\), define an admissible integrator on the
grid. Atom by atom,
\[
\nu_u=L\sigma\mu_\beta,
\quad
|u(x)-u(\beta(x))|=L|x-\beta(x)|.
\]
If an ambient definition requires \(u\) on the whole interval, interpolate
these values linearly between consecutive orbit points on each side of
\(s_0\). The absolute summability just proved, together with at most one
finite jump at \(s_0\), gives a bounded-variation extension without changing
any of the atomic increments.

For \(m<M\), take
\[
f:=m+(M-m)\mathbf1_A.
\]
Since \(f-\E_{P_\beta}f=(M-m)(1-p)\) on \(A\) and
\(-(M-m)p\) on \(A^c\), one obtains
\[
\begin{aligned}
\int
\left(f-\E_{P_\beta}f\right)d\nu_u
&=
L(M-m)
\left((1-p)\mu_\beta(A)+p\mu_\beta(A^c)\right)\\[7pt]
&=
2L(M-m)(b-a)p(1-p)\\[7pt]
&=
2L\kappa_\beta(M-m)(b-a).
\end{aligned}
\]
Thus equality is attained and no smaller fixed-grid coefficient is
possible.
\end{proof}

\begin{remark}[Exact Riemann--Stieltjes reduction]
\label{rem:CHM-RS-reduction}
If \(a<s_0<b\) and the two orbit limits of \(u\) exist, then
\[
\nu_u([a,b]_\beta)
=
u(b)-u(a)-\bigl(u(s_0^+)-u(s_0^-)\bigr).
\]
Theorem~4.5 of \cite{CHM} reduces to the following signed-measure
calculation:
\[
\begin{split}
&\left|
\int_a^b f(x)\,d_\beta u(x)
-
\frac{\nu_u([a,b]_\beta)}{b-a}
\int_a^b f(t)\,d_\beta t
\right|\\[7pt]
&\quad=
\left|
\int_{[a,b]_\beta}
\left(f-\E_{P_\beta}f\right)d\nu_u
\right|\\[7pt]
&\quad\le
L\int_{[a,b]_\beta}
\left|f-\E_{P_\beta}f\right|d\mu_\beta\\[7pt]
&\quad\le
2L\kappa_\beta(M-m)(b-a)\\[7pt]
&\quad\le
\frac12L(M-m)(b-a).
\end{split}
\]
The first equality is centring, the first inequality is
\(|\nu_u|\le L\mu_\beta\), and the second is the
mean-absolute-deviation bound of
Theorem~\ref{thm:measure-sensitive-gruss}. Thus the estimate in
Theorem~4.5 is entirely accounted for by these general measure inequalities.
Its separate orbit-limit assumption is also redundant, because the
\(\beta\)-Lipschitz condition
makes the increments absolutely summable and hence forces both limits to
exist. The final coefficient \(1/2\) is sharp uniformly over the class of
admissible grids; on a prescribed grid the exact coefficient in this
centred signed-measure estimate is \(2\kappa_\beta\). For the interior
example
\[
\beta(t)=\frac t3,
\quad
[a,b]=\left[-\frac16,1\right],
\]
it equals \(24/49<1/2\). Accordingly, any assertion that the coefficient
\(1/2\) is ``best possible'' must state the uniform, rather than
fixed-grid, quantifier.

The surrounding statements reduce in the same way. Proposition~4.4 of
\cite{CHM} is the integral consequence of
\(|\nu_u|\le L\mu_\beta\). Corollaries~4.7, 4.10, 4.12 and~4.13 amount to
imposing continuity at \(s_0\), replacing \(L\) by
\(\|D_\beta u\|_\infty\), or both. Corollaries~4.19 and~4.20 then substitute
the Jackson and Hahn affine maps. Corollary~4.15 belongs instead to the
positive-measure case: its left-hand side is a covariance, so its valid but
non-optimal coefficient \(1/2\) is sharpened to \(1/4\) in
Corollary~\ref{cor:CHM-415}, disproving only the accompanying optimality
claim. Corollary~4.17 uses a genuine
\(\beta\)-product identity to rewrite its left-hand side, but its inequality
step, on the domain where that identity is valid, is again a covariance
estimate. Since the two covariance entries are coupled through
\(H=f+f\circ\beta\) and \(h=D_\beta f\), this reduction does not by itself
establish the separate optimality assertion in that corollary. The
calculus-specific identities may therefore change the appearance of an
expression; they do not create a new Gr\"uss estimate.
\end{remark}

\begin{remark}[The domain and reduction of Corollary~4.17]
Corollary~4.17 of \cite{CHM} is presented there as a
\(\beta\)-counterpart of a classical Riemann--Stieltjes estimate in
\cite[Corollary~2.3]{DragomirFedotov}. Assume its boundedness hypotheses:
there
exist \(m,M\in\mathbb R\) such that
\[
m\le f\le M
\]
on the grid, and \(D_\beta f\) is bounded on the positive-mass atoms. For
\(t\ne s_0\), write
\[
D_\beta f(t)
:=
\frac{f(\beta(t))-f(t)}{\beta(t)-t}.
\]
In the measure-theoretic argument, its value at the zero-mass point \(s_0\)
is immaterial, and
\(\|D_\beta f\|_\infty\) denotes the supremum over the positive-mass orbit
atoms. The boundedness of \(D_\beta f\) makes the increments of \(f\)
absolutely summable along both orbits. Hence the limits
\[
\ell_b:=\lim_{k\to\infty}f(\beta^k(b)),
\quad
\ell_a:=\lim_{k\to\infty}f(\beta^k(a)),
\]
exist. Moreover,
\(
(f+f\circ\beta)D_\beta f\in L^1(P_\beta)
\),
so direct telescoping gives the identity
\[
\int_a^b
\bigl(f(t)+f(\beta(t))\bigr)D_\beta f(t)\,d_\beta t
=
f(b)^2-f(a)^2-\bigl(\ell_b^2-\ell_a^2\bigr).
\]
The shorter identity used in the proof of that corollary therefore requires
\(\ell_b^2=\ell_a^2\); in particular, it is valid when \(f\) is continuous
at \(s_0\). If the value \(D_\beta f(s_0)\) is understood through the
classical derivative \(f'(s_0)\), that continuity is implicit. If, in the
orbitwise interpretation permitted by \cite[Remark~4.14]{CHM}, boundedness
of \(D_\beta f\) allows a jump at \(s_0\), the missing term must be retained
and Corollary~4.17 does not follow as printed.

In fact, under that orbitwise interpretation the corollary is false. Take
\[
\beta(t)=\frac t2
\quad\text{on}\quad
[a,b]=[-1,3],
\]
and define \(f=0\) on the left orbit
\(\{\beta^k(-1):k\ge0\}\), \(f=1\) on the right orbit
\(\{\beta^k(3):k\ge0\}\), with an arbitrary value at \(s_0=0\). Then
\(0\le f\le1\), \(f(a)\ne f(b)\), and \(D_\beta f=0\) at every
positive-mass atom. Moreover, \(f\circ\beta=f\) there and the right orbit
has \(P_\beta\)-mass \(3/4\). The left-hand side of
\cite[equation~(4.18)]{CHM} is therefore
\[
\left|\frac12-\frac34\right|=\frac14,
\]
whereas its right-hand side is zero. By contrast, if the definition at
\(s_0\) strictly requires the classical derivative \(f'(s_0)\), then
differentiability forces continuity and excludes this example. Thus the
truth value of the printed statement depends on a convention that must be
made explicit.

Under the hypothesis \(\ell_a=\ell_b\), and assuming \(f(a)\ne f(b)\), put
\[
H:=f+f\circ\beta,
\quad
h:=D_\beta f,
\quad
\Delta:=f(b)-f(a).
\]
The fundamental and product identities give
\[
\E_{P_\beta}h=\frac{\Delta}{b-a},
\quad
\E_{P_\beta}(Hh)=\frac{\Delta(f(a)+f(b))}{b-a}.
\]
Consequently, the left-hand side of \cite[equation~(4.18)]{CHM} is exactly
\[
\frac{b-a}{2|\Delta|}
\left|\Cov_{P_\beta}(H,h)\right|.
\]
Since the ranges of \(H\) and \(h\) have lengths at most
\(2(M-m)\) and \(2\|D_\beta f\|_\infty\), respectively,
Theorem~\ref{thm:measure-sensitive-gruss} gives the sharper fixed-grid
bound with coefficient \(2\kappa_\beta\le1/2\). Thus, once the product
identity is stated on its proper domain, the inequality in
Corollary~4.17 is again an application of the ordinary Gr\"uss inequality.
However, \(H\) and \(h\) are not independent: both arise from the same
function \(f\) through \(H=f+f\circ\beta\) and \(h=D_\beta f\).
Consequently, sharpness of the abstract covariance bound does not prove
sharpness on this restricted coupled class. The reduction supplies the
valid coefficient \(2\kappa_\beta\), but it does not establish the separate
``best possible'' assertion made in \cite{CHM}.
\end{remark}

\begin{remark}[Sharpness and the printed equality argument]
The coefficient \(1/2\) in
Theorem~\ref{thm:RS-beta-gruss-centred} is sharp over the full class of
admissible grids. Indeed, take
\[
\begin{aligned}
\beta(t)&=\frac t2,
&
[a,b]&=[-1,1],\\[7pt]
u(t)&=|t|,
&
f(t)&=\operatorname{sgn}(t)
\end{aligned}
\]
on the two non-zero orbits. Then \(L=1\), \(m=-1\), \(M=1\), and the two
orbits have equal \(\mu_\beta\)-mass. The centred left-hand side equals \(2\),
which is also
\(
\frac12 L(M-m)(b-a).
\)

The equality example used in the proof of Theorem~4.5 of \cite{CHM} does
not establish sharpness for a general \(\beta\). That proof takes
\(c=(a+b)/2\), \(u(x)=|x-c|\), and
\(f(x)=\operatorname{sgn}(x-c)\), and treats the two orbit contributions as
if no orbit could cross \(c\). For
\[
\beta(t)=\frac t2,
\quad
[a,b]=[-1,3],
\quad
c=1,
\]
the left-orbit contribution is \(1\), whereas the right-orbit contribution
is
\[
\frac32+\frac14+\sum_{k=2}^{\infty}\frac{3}{2^{k+1}}
=
\frac52.
\]
The centred expression is therefore \(7/2\), not \(b-a=4\) as asserted in
the printed equality calculation. That computation establishes neither its
claimed equality for general \(\beta\) nor a fixed-grid optimality
statement. The coefficient \(1/2\) is nevertheless sharp uniformly over
all admissible grids, as the symmetric example above shows, whereas
Theorem~\ref{thm:RS-beta-gruss-centred} gives the exact coefficient
\(2\kappa_\beta\) on every fixed grid.
\end{remark}

\section{Atomic quadrature and fixed-point correction}
\label{sec:quadrature}

Neither truncation of a defining \(q\)-series nor the use of geometrically
graded nodes is, by itself, new in \(q\)-quadrature;
see~\cite{EslahchiMasjedJamei,MasjedJameiMilovanovicJafari}. The specific
contribution isolated here is the mass-preserving replacement of the two
omitted tails, whose masses are known exactly, by an atom at the fixed point,
together with the derivation of the resulting modulus-of-continuity and
H\"older error bounds within the general framework of the positive
\(\beta\)-integral. We make no broader priority claim.

The atomic representation has a direct consequence for numerical
approximation. For \(N\in\mathbb N\), set
\[
\begin{split}
Q_N(f):={}&
\sum_{k=0}^{N}
\bigl(\beta^k(b)-\beta^{k+1}(b)\bigr)f(\beta^k(b))\\[7pt]
&+
\sum_{k=0}^{N}
\bigl(\beta^{k+1}(a)-\beta^k(a)\bigr)f(\beta^k(a)).
\end{split}
\]
Terms having zero weight are omitted. This is a finite positive
quadrature formula. The masses omitted on the
right and left orbits are, respectively,
\[
r_{b,N}:=\beta^{N+1}(b)-s_0,
\quad
r_{a,N}:=s_0-\beta^{N+1}(a),
\]
by telescoping. Thus the total unresolved mass \(r_{a,N}+r_{b,N}\) is known
before \(f\) is evaluated. Together with the estimates below, this gives a
rigorous, directly computable stopping criterion whenever a bound for \(f\),
or a computable upper bound for its local modulus of continuity, is
available. The tail mass alone is not an a posteriori error estimator: a
fully a posteriori adaptive strategy based solely on the computed function
values would require an additional estimator.

The simplest mass-preserving correction places the unresolved mass at the
fixed point:
\[
Q_N^{\mathrm{fp}}(f)
:=
Q_N(f)+(r_{a,N}+r_{b,N})f(s_0).
\]
The corrected rule is positive and exact for constants.
Since \(\mu_\beta(\{s_0\})=0\), neither the \(\beta\)-integral nor the
corresponding almost-everywhere equivalence class determines \(f(s_0)\).
Thus \(Q_N^{\mathrm{fp}}\) acts on a chosen pointwise representative. The
estimates below concern representatives having the indicated local modulus
at \(s_0\), in particular continuous or H\"older continuous representatives
whenever such a representative exists.

\begin{proposition}[Fixed-point correction]\label{prop:fixed-point-quadrature}
Let \(f\) be a chosen real- or complex-valued pointwise representative of an
element of \(L^1(\mu_\beta)\), with \(f(s_0)\) finite. Define its local
modulus on the grid by
\[
\omega_{f,s_0}(r)
:=
\sup\bigl\{
|f(x)-f(s_0)|:
x\in[a,b]_\beta,\ |x-s_0|\le r
\bigr\}.
\]
Then
\[
\left|\int_a^b f\,d_\beta x-Q_N^{\mathrm{fp}}(f)\right|
\le
r_{b,N}\omega_{f,s_0}(r_{b,N})
+
r_{a,N}\omega_{f,s_0}(r_{a,N}),
\]
whenever the quantities on the right are finite. In particular, if
\[
|f(x)-f(s_0)|\le L|x-s_0|^\alpha
\quad (x\in[a,b]_\beta)
\]
for some \(L\ge0\) and \(0<\alpha\le1\), then
\[
\left|\int_a^b f\,d_\beta x-Q_N^{\mathrm{fp}}(f)\right|
\le L\bigl(r_{b,N}^{\alpha+1}+r_{a,N}^{\alpha+1}\bigr).
\]
If, in addition,
\[
\beta(t)=s_0+q(t-s_0),\quad 0<q<1,
\]
then one has the sharper estimate
\[
\left|\int_a^b f\,d_\beta x-Q_N^{\mathrm{fp}}(f)\right|
\le
L\frac{1-q}{1-q^{\alpha+1}}\,
q^{(\alpha+1)(N+1)}
\bigl((b-s_0)^{\alpha+1}+(s_0-a)^{\alpha+1}\bigr).
\]
\end{proposition}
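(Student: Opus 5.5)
The plan is to write the quadrature error as an integral of \(f-f(s_0)\) over the two omitted tails, and then bound each tail separately.

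\textbf{Error as a tail integral.} By the atomic representation, \(f\in L^1(\mu_\beta)\) gives
\[
\int_a^b f\,d_\beta x
=
\sum_{k\ge0}w_{b,k}f(x_{b,k})+\sum_{k\ge0}w_{a,k}f(x_{a,k}),
\]
with both series absolutely convergent. Subtracting \(Q_N(f)\) leaves the two tails \(\sum_{k>N}w_{b,k}f(x_{b,k})\) and \(\sum_{k>N}w_{a,k}f(x_{a,k})\). By telescoping, the tail masses are \(\sum_{k>N}w_{b,k}=r_{b,N}\) and \(\sum_{k>N}w_{a,k}=r_{a,N}\). Hence the correction term \((r_{a,N}+r_{b,N})f(s_0)\) can be absorbed into the tails:
\[
\int_a^b f\,d_\beta x-Q_N^{\mathrm{fp}}(f)
=
\sum_{k>N}w_{b,k}\bigl(f(x_{b,k})-f(s_0)\bigr)
+\sum_{k>N}w_{a,k}\bigl(f(x_{a,k})-f(s_0)\bigr).
\]
Zero-weight terms, including a stationary endpoint branch, contribute nothing on either side, so they need no separate treatment.

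\textbf{Modulus bound.} For \(k>N\), monotone convergence of the orbits to \(s_0\) gives \(|x_{b,k}-s_0|\le\beta^{N+1}(b)-s_0=r_{b,N}\), and similarly \(|x_{a,k}-s_0|\le r_{a,N}\). Each term is therefore at most \(w_{b,k}\,\omega_{f,s_0}(r_{b,N})\), respectively \(w_{a,k}\,\omega_{f,s_0}(r_{a,N})\). Summing the weights gives the first estimate.

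\textbf{Hölder case.} Here \(\omega_{f,s_0}(r)\le Lr^\alpha\), so the first estimate immediately gives \(L\bigl(r_{b,N}^{\alpha+1}+r_{a,N}^{\alpha+1}\bigr)\).

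\textbf{Affine case.} This is the only step needing more than a supremum, because the estimate must use the actual distances \(|x_{b,k}-s_0|^\alpha\) instead of their maximum. For \(\beta(t)=s_0+q(t-s_0)\) one has \(x_{b,k}-s_0=q^k(b-s_0)\) and \(w_{b,k}=(1-q)q^k(b-s_0)\). Then
\[
\sum_{k>N}w_{b,k}|x_{b,k}-s_0|^\alpha
=(1-q)(b-s_0)^{\alpha+1}\sum_{k\ge N+1}q^{k(\alpha+1)}
=\frac{1-q}{1-q^{\alpha+1}}\,q^{(\alpha+1)(N+1)}(b-s_0)^{\alpha+1}.
\]
The \(a\)-orbit is handled identically with \(s_0-a\) in place of \(b-s_0\). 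Adding the two contributions gives the stated bound. It is indeed sharper, because \(\frac{1-q}{1-q^{\alpha+1}}\le1\) and \(r_{b,N}^{\alpha+1}=q^{(\alpha+1)(N+1)}(b-s_0)^{\alpha+1}\).

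The main point needing care is the rearrangement in the first step. This is justified by absolute convergence, which follows from \(f\in L^1(\mu_\beta)\) and the finiteness of \(f(s_0)\) together with the total mass of the tails. Everything after that is a direct estimate.
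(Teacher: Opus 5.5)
Your proof is correct and follows essentially the same route as the paper: you write the error as the two omitted tails of \(f-f(s_0)\), bound the distances to \(s_0\) by \(r_{b,N}\) and \(r_{a,N}\) using monotonicity of the orbits, telescope the weights, and sum two geometric series in the affine case. The only differences are cosmetic: you derive the H\"older bound from the modulus estimate rather than from the tail sums directly, and you state explicitly the absolute-convergence justification for the rearrangement, which the paper leaves implicit.
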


\begin{proof}
Subtracting \(Q_N^{\mathrm{fp}}(f)\) from the integral expresses the error as
the sum over the two omitted atomic tails, with \(f\) replaced by
\(f-f(s_0)\). On the
\(b\)-tail the distance to \(s_0\) is at most \(r_{b,N}\), and on the
\(a\)-tail it is at most \(r_{a,N}\). Positivity and telescoping therefore
give the modulus-of-continuity estimate. Under the H\"older condition,
\[
\begin{split}
\left|\int_a^b f\,d_\beta x-Q_N^{\mathrm{fp}}(f)\right|
\le L\sum_{k=N+1}^{\infty}&
\bigl(\beta^k(b)-\beta^{k+1}(b)\bigr)
|\beta^k(b)-s_0|^\alpha\\[7pt]
{}+L\sum_{k=N+1}^{\infty}&
\bigl(\beta^{k+1}(a)-\beta^k(a)\bigr)
|\beta^k(a)-s_0|^\alpha.
\end{split}
\]
Bounding the distance on each monotone orbit by its value at \(k=N+1\) and
telescoping the weights proves the first estimate. In the affine
case, the distances are \(q^k(b-s_0)\) and \(q^k(s_0-a)\), and each weight is
\((1-q)\) times the corresponding distance. Summing the two geometric
series proves the final estimate.
\end{proof}

The corrected and uncorrected errors satisfy the exact identity
\[
\int_a^b f\,d_\beta x-Q_N(f)
=
(r_{a,N}+r_{b,N})f(s_0)
+
\left(\int_a^b f\,d_\beta x-Q_N^{\mathrm{fp}}(f)\right).
\]
Thus the fixed-point atom cancels the constant part of the tail rather than
merely producing a different a priori estimate. If \(|f|\le M\) on the
\(\beta\)-grid, the uncorrected rule gives
\[
\left|\int_a^b f\,d_\beta x-Q_N(f)\right|
\le M(r_{a,N}+r_{b,N}).
\]
For affine \(\beta\), this bound is of order \(q^{N+1}\), whereas the
fixed-point corrected bound is of order \(q^{(\alpha+1)(N+1)}\).

The gain is visible without recourse to floating-point experiments in an
exact Lipschitz example with a corner at the accumulation point. Take
\[
\beta(t)=\frac t2,
\quad
[a,b]=[-1,1],
\quad
s_0=0,
\quad
f(x)=1+|x|.
\]
Each of the two atoms at \(\pm2^{-k}\) has
\(\mu_\beta\)-mass \(2^{-k-1}\). If
\[
I_\beta(f):=\int_{-1}^{1}f(x)\,d_\beta x,
\]
and \(E_N\), respectively \(E_N^{\mathrm{fp}}\), denotes the absolute error
of the uncorrected, respectively fixed-point corrected, rule, direct
summation gives
\[
\begin{aligned}
I_\beta(f)
&=
2\sum_{k=0}^{\infty}
2^{-k-1}\left(1+2^{-k}\right)
=
\frac{10}{3},\\[7pt]
E_N
&=
\left|I_\beta(f)-Q_N(f)\right|
=
2^{-N}+\frac{1}{3\cdot4^N},\\[7pt]
E_N^{\mathrm{fp}}
&=
\left|I_\beta(f)-Q_N^{\mathrm{fp}}(f)\right|
=
\frac{1}{3\cdot4^N}.
\end{aligned}
\]
Thus the affine H\"older estimate in
Proposition~\ref{prop:fixed-point-quadrature}, with
\(L=\alpha=1\), is attained with equality. Moreover,
\[
\frac{E_N}{E_N^{\mathrm{fp}}}
=
3\cdot2^N+1.
\]
Table~\ref{tab:fixed-point-example} therefore displays the proved change
from order \(2^{-N}\) to order \(4^{-N}\) and an equality case for the
H\"older estimate. It is not a general comparison with Gaussian, graded or
adaptive quadrature methods.

\begin{table}[H]
\caption{Exact approximations and truncation errors for
\(\beta(t)=t/2\) on~\([-1,1]\) and \(f(x)=1+|x|\).}
\label{tab:fixed-point-example}
\centering
\normalsize
\begingroup
\renewcommand{\arraystretch}{1.15}
\arrayrulecolor{tableline}
\begin{tabularx}{0.94\textwidth}
{>{\centering\arraybackslash}p{0.08\textwidth}
 *{4}{>{\centering\arraybackslash}X}}
\hline
\rowcolor{tableheader}
\(\boldsymbol{N}\)
&
\(\boldsymbol{Q_N(f)}\)
&
\(\boldsymbol{Q_N^{\mathrm{fp}}(f)}\)
&
\(\boldsymbol{E_N}\)
&
\(\boldsymbol{E_N^{\mathrm{fp}}}\)\\[7pt]
\hline
\rowcolor{tableband}
\(0\) & \(2\) & \(3\) & \(4/3\) & \(1/3\)\\[7pt]
\(1\) & \(11/4\) & \(13/4\) & \(7/12\) & \(1/12\)\\[7pt]
\rowcolor{tableband}
\(2\) & \(49/16\) & \(53/16\) & \(13/48\) & \(1/48\)\\[7pt]
\(3\) & \(205/64\) & \(213/64\) & \(25/192\) & \(1/192\)\\[7pt]
\rowcolor{tableband}
\(4\) & \(837/256\) & \(853/256\) & \(49/768\) & \(1/768\)\\[7pt]
\(5\) & \(3381/1024\) & \(3413/1024\) & \(97/3072\) & \(1/3072\)\\[7pt]
\rowcolor{tableband}
\(6\) & \(13589/4096\) & \(13653/4096\) & \(193/12288\) & \(1/12288\)\\[7pt]
\(7\) & \(54485/16384\) & \(54613/16384\) & \(385/49152\) & \(1/49152\)\\[7pt]
\rowcolor{tableband}
\(8\) & \(218197/65536\) & \(218453/65536\) & \(769/196608\) & \(1/196608\)\\[7pt]
\hline
\end{tabularx}
\endgroup
\end{table}

\begin{proposition}[An integrable singularity]\label{prop:singular-quadrature}
Assume that
\[
\beta(t)=s_0+q(t-s_0),\quad 0<q<1,
\]
and that, away from \(s_0\),
\[
|f(x)|\le C|x-s_0|^{-\gamma}
\quad (x\in[a,b]_\beta\setminus\{s_0\})
\]
for some \(C\ge0\) and \(0\le\gamma<1\). Then \(f\) is absolutely
\(\beta\)-integrable and
\[
\begin{split}
\left|\int_a^b f\,d_\beta x-Q_N(f)\right|
\le{}&
C\frac{1-q}{1-q^{1-\gamma}}\,
q^{(1-\gamma)(N+1)}\\[7pt]
&{}\times
\bigl((b-s_0)^{1-\gamma}+(s_0-a)^{1-\gamma}\bigr).
\end{split}
\]
The value of \(f\) at \(s_0\) is immaterial and may be assigned arbitrarily.
\end{proposition}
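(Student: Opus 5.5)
The approach is to use the atomic representation of \(\mu_\beta\) from Section~3 and the affine form of the weights, as in the Jackson and Hahn remark. For \(\beta(t)=s_0+q(t-s_0)\) one has \(\beta^k(b)-s_0=q^k(b-s_0)\) and \(s_0-\beta^k(a)=q^k(s_0-a)\). The corresponding weights are
\[
w_{b,k}=(1-q)(b-s_0)q^k,
\quad
w_{a,k}=(1-q)(s_0-a)q^k.
\]
So each weight equals \((1-q)\) times the distance of its atom to \(s_0\). At a stationary endpoint, for instance \(a=s_0\), that branch has zero weight and contributes nothing; the corresponding term \((s_0-a)^{1-\gamma}\) on the right is then zero, which is consistent. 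In particular the singular point \(s_0\) itself carries no mass, since \(\mu_\beta(\{s_0\})=0\). This is why the value \(f(s_0)\) is immaterial.

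The first step is absolute integrability. On the \(b\)-orbit the hypothesis gives
\[
w_{b,k}|f(\beta^k(b))|
\le C(1-q)(b-s_0)q^k\bigl(q^k(b-s_0)\bigr)^{-\gamma}
= C(1-q)(b-s_0)^{1-\gamma}q^{k(1-\gamma)}.
\]
This is summable because \(0\le\gamma<1\) gives \(q^{1-\gamma}<1\). The \(a\)-orbit is handled identically. Hence \(\int|f|\,d\mu_\beta<\infty\), and by the identification in Section~3 \(f\) is absolutely \(\beta\)-integrable.

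The second step is the error bound. Because \(f\) is \(\mu_\beta\)-integrable, \(\int_a^b f\,d_\beta x-Q_N(f)\) equals the sum of the two omitted tails, namely the terms with \(k\ge N+1\) on each orbit. Bounding these tails termwise by the previous display and summing the geometric series gives
\[
\sum_{k\ge N+1}C(1-q)(b-s_0)^{1-\gamma}q^{k(1-\gamma)}
=C\frac{1-q}{1-q^{1-\gamma}}\,q^{(1-\gamma)(N+1)}(b-s_0)^{1-\gamma},
\]
and the analogous bound holds on the \(a\)-orbit. Adding the two gives the stated estimate.

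No step is really an obstacle. The only points needing care are the stationary-endpoint convention, where zero-weight terms are omitted from \(Q_N\), and the fact that \(Q_N\) never evaluates \(f\) at \(s_0\). That is exactly why the singularity is harmless here, in contrast to the corrected rule \(Q_N^{\mathrm{fp}}\).
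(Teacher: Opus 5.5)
Your proposal is correct and follows the paper's proof: you bound the $k$-th weighted term on the $b$-orbit by $C(1-q)(b-s_0)^{1-\gamma}q^{(1-\gamma)k}$, and analogously on the $a$-orbit. Summability comes from $\gamma<1$, and summing the geometric series from $k=N+1$ gives the estimate.
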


\begin{proof}
On the \(b\)-orbit, the absolute value of the \(k\)-th weighted term is at
most
\[
C(1-q)(b-s_0)^{1-\gamma}q^{(1-\gamma)k},
\]
and the analogous bound on the \(a\)-orbit contains
\((s_0-a)^{1-\gamma}\). Both are summable because \(\gamma<1\).
Summing from \(k=N+1\) proves the estimate.
\end{proof}

The two propositions address different regularity regimes near the fixed
point.
Proposition~\ref{prop:fixed-point-quadrature} applies when a finite value at
\(s_0\) and local continuity are available; a genuine singularity makes the
fixed-point evaluation unavailable, but the uncorrected graded rule remains
covered by Proposition~\ref{prop:singular-quadrature}. Similar geometric
concentration may be useful for an internal layer, a sharp transition, or a
discontinuity of a derivative. The intended advantages are positivity,
stability, automatic orbit-adapted refinement near the fixed
point---geometric for affine \(\beta\)---and explicit tail control; no
general superiority over high-order Gaussian quadrature should be expected
for globally smooth integrands.

\section{Conclusions}

The atomic representation supplies a uniform structural description of the
positive \(\beta\)-integral in the absolutely integrable regime. It gives the
canonical weighted sequence-space model in
Proposition~\ref{prop:atomic-lp-model}. After passing to the natural quotient
that identifies representatives on zero-mass points, the H\"older and Minkowski
inequalities and the completeness claims for \(1\le p\le\infty\),
separability for \(1\le p<\infty\), reflexivity for \(1<p<\infty\), and
Hilbert-space structure for \(p=2\) developed in
\cite{CardosoPetronilhoJackson,CardosoGeneral} are the standard properties
of atomic \(L^p\)-spaces. When \(a<s_0<b\), the explicit affine sequence
model used in the earlier literature is a special case of this canonical
measure representation. At a stationary endpoint, however,
Remark~\ref{rem:Cardoso-Lp-reduction} shows that, for \(1\le p<\infty\),
the printed quotient leaves a non-zero class of zero integral norm; for
every \(p\), the stated two-factor evaluation map is not onto at an
endpoint. The correct model discards the zero-weight branch and contains
one sequence factor. Subject to that correction, the
functional-analytic consequences are instances of the standard theory of
atomic \(L^p\)-spaces. This reduction isolates the \(L^p\) and positivity
content; it does not replace the operator calculus, integration-by-parts
formulae, or ordered-orbit arguments developed in the cited works.

The identity \(T_\beta=\Cov_{P_\beta}\) accounts for all the displayed
identities and inequalities in Section~3 of \cite{CHM}, once the necessary
product- and square-integrability assumptions are imposed; expanding
expectations into orbit sums does not alter the underlying theorem. The probability model in
its Section~5 is precisely \(P_\beta\), so equation~(5.2) is the same
covariance bound, whereas equation~(5.3) requires an additional sign
hypothesis. In the signed Riemann--Stieltjes setting, the domination
\(|\nu_u|\le L\mu_\beta\) yields Proposition~4.4, Theorem~4.5 and their
stated specialisations. The measure formulation also shows that the
orbit-limit assumption is automatic, gives the exact fixed-grid coefficient
\(2\kappa_\beta\), sharpens the valid but non-optimal coefficient \(1/2\) in
Corollary~4.15 to \(1/4\), thereby correcting its optimality claim, and
identifies the domain qualification required in Corollary~4.17. Under the
discontinuous orbitwise interpretation, the latter corollary is false by
the explicit two-orbit counterexample above; under the strict classical
derivative convention, continuity restores the product identity. Even on
that valid domain, the covariance reduction does not prove the printed
optimality assertion because its two entries are coupled through the same
function \(f\).

Consequently, the valid Gr\"uss-type estimates in Sections~3 and~4 of
\cite{CHM} are already contained in standard probability and
total-variation theory. The
operator-specific content lies in the orbit measure, the \(D_\beta\)
calculus and the associated algebraic identities; the Gr\"uss estimates
themselves do not require a separate \(\beta\)-calculus argument. The
reduction also supplies quantitative information absent from the universal
bounds: \(\kappa_\beta\) and \(2\kappa_\beta\) are the exact positive and
signed centred coefficients on a fixed grid and may be strictly smaller
than \(1/4\) and \(1/2\), respectively.

Finally, the same atomic structure has a numerical consequence not contained
in the preceding reductions. Truncation gives positive rules whose residual
mass is known exactly. The fixed-point correction cancels the constant part
of the tail and yields the H\"older bounds of
Proposition~\ref{prop:fixed-point-quadrature}; when \(f(s_0)\) is unavailable
because of an integrable singularity, the uncorrected graded rule satisfies
Proposition~\ref{prop:singular-quadrature}. Positivity, stability,
orbit-adapted refinement---geometric for affine \(\beta\)---and computable
tail control are the expected advantages. A
systematic numerical comparison with other graded, adaptive and Gaussian
rules remains a natural continuation.

\section*{Declarations}



\subsection*{Authors' contributions}

This work arose from discussions between the authors and was motivated by
previous work by \^A.~Macedo and co-authors on the subject. The
measure-theoretic approach developed in this note was proposed by KC during
those discussions. Both authors developed and verified the results, prepared
and revised the manuscript, and approved its final version.



\subsection*{Funding}
KC acknowledges financial support from the Centre for Mathematics of the
University of Coimbra (CMUC), funded by the Portuguese Foundation for Science
and Technology (FCT), under the projects UID/00324/2025
(\url{https://doi.org/10.54499/UID/00324/2025}) and UID/PRR/00324/2025,
and from FCT under the grant
\url{https://doi.org/10.54499/2022.00143.CEECIND/CP1714/CT0002}.
\^A.~Macedo's research was partially financed by Portuguese funds through FCT,
under the project UID/00013/2025
(\url{https://doi.org/10.54499/UID/00013/2025}).

\end{document}